\documentclass[12pt]{article}

\usepackage{color}
\usepackage[utf8]{inputenc}

\topmargin -2cm
\textheight 23.5cm
\textwidth=16.5cm
\evensidemargin=0cm
\oddsidemargin=0cm

\usepackage{amsmath}
\usepackage{amssymb}
\usepackage{amsthm}
\usepackage{graphicx}

\DeclareMathOperator{\Co}{Co}

\DeclareMathOperator{\tr}{tr}

\newcommand{\gesem}{\succeq}

\newcommand{\suchthat}{:}
\newcommand{\tran}{^T}
\newcommand{\set}[1]{\{#1\}}

\renewcommand{\Dot}{\bullet}
\newcommand{\intersect}{\cap}

\newcommand{\Abar}{{\bar A}}

\newcommand{\Kcal}{{\cal K}}

\newcommand{\lam}{\lambda}
\newcommand{\BQP}{{\rm BQP}}

\newcommand{\DNN}{{\rm DNN}}
\renewcommand{\Box}{{\rm Box}}
\newcommand{\BoxQP}{{\rm BoxQP}}

\newcommand{\QPB}{{\rm QPB}}

\newcommand{\Pcal}{{\cal P}}
\newcommand{\Rbb}{\mathbb{R}}

\newcommand{\Scal}{{\cal S}}

\newcommand{\Tcal}{{\cal T}}

\newtheorem{lemma}{Lemma}

\begin{document}

\title{Extended Triangle Inequalities for Nonconvex \\
Box-Constrained Quadratic Programming}

\author{ Kurt M. Anstreicher\footnote{Dept. of Business Analytics, University of Iowa, Iowa City, Iowa, USA,
{\tt kurt-anstreicher@uiowa.edu}} \and Diane Puges\footnote{Dept. of Mathematics, Alpen-Adria-Universit{\"a}t, Klagenfurt, Austria, {\tt diane.puges@aau.at}}}

\date{\today}

\maketitle

\begin{abstract}
Let $\Box_n = \set{x\in\Rbb^n\suchthat 0\le x_i\le 1,\ i=1,\ldots,n}$, and let 
$\QPB_n$ denote the convex hull of $\set{(1, x\tran)\tran(1, x\tran)\suchthat x\in\Box_n}$. The quadratic programming problem 
$\min\set{x\tran Q x + q\tran x \suchthat x\in\Box_n}$ where $Q$ is not positive semidefinite (PSD), is equivalent to a linear optimization problem over
$\QPB_n$ and could be efficiently solved if a tractable characterization of $\QPB_n$ was available.  It is known that
$\QPB_2$ can be represented using a PSD constraint combined with constraints generated using the reformulation-linearization technique (RLT). The triangle (TRI) inequalities are also valid for
$\QPB_3$, but the PSD, RLT and TRI constraints together do not fully characterize $\QPB_3$. In this paper we describe new valid linear inequalities for $\QPB_n$, $n\ge 3$ based on strengthening the approximation of $\QPB_3$ given by the PSD, RLT and TRI constraints.  These new inequalities are generated in a systematic way using a known disjunctive characterization for $\QPB_3$. We also describe a conic strengthening of the linear inequalities that incorporates
second-order cone constraints. We show computationally that the new inequalities and their conic strengthenings obtain exact solutions for some nonconvex
box-constrained instances that are not solved exactly using the PSD, RLT and TRI constraints.

\medskip\noindent
{\bf Keywords:} triangle inequalities, box-constrained quadratic programming, nonconvex quadratic programming.\\

\noindent
{\bf Mathematics Subject Classification:} 90C20, 90C26, 90C15.
\end{abstract}

\section{Introduction}

In this paper we are concerned with the box-constrained quadratic programming
problem
\begin{eqnarray*}
\BoxQP:&\max&x\tran Q x + q\tran x\\
&{\rm s.t.}& 0\le x_i\le 1,\quad i=1,\ldots,n.
\end{eqnarray*}
If the matrix $Q$ is positive semidefinite (PSD) then BoxQP can be efficiently solved by a variety of methods, but in general BoxQP is an NP-hard problem that
has been heavily studied in the global optimization literature; see for example \cite{Vandenbussche.Nemhauser.2005} and references therein. BoxQP problems can be approached using general-purpose software such as BARON \cite{BARON}, or specialized methods that are tailored to the problem.  Examples
of the latter include the finite branching algorithm of \cite{Burer.Vandenbussche.2009}, a specialization of \cite{Burer.Vandenbussche.2008} which
itself is a strengthening of the finite branching algorithm of \cite{Vandenbussche.Nemhauser.2005}, and methods based on mixed-integer linear programming 
\cite{Bonami.Gunluk.Linderoth.2018, Yajima.Fujie.1998}. 

Let $\Box_n = \set{x\in\Rbb^n\suchthat 0\le x_i\le 1,\ i=1,\ldots,n}$, and
$\QPB_n=\Co\set{(1, x\tran)\tran(1, x\tran)\suchthat x\in\Box_n}$, where $\Co$ denotes the convex hull. For $x\in\Rbb^n$ and a symmetric $n\times n$ matrix $X$,
let
\[
Y(x,X)= \begin{pmatrix} 1 & x\tran \\ x & X\end{pmatrix}.
\]
The BoxQP problem can then be rewritten in the form of a linear optimization problem
\begin{eqnarray*}
\BoxQP:&\max&Q\Dot X + q\tran x\\
&{\rm s.t.}& Y(x,X)\in\QPB_n,
\end{eqnarray*}
where $Q\Dot X$ denotes the matrix inner product equal to the trace of $QX$. 
Since the extreme points of $\QPB_n$ correspond to rank-one solutions with $X=xx\tran$, and the objective is linear, there will always be an optimal solution
of the latter problem for which $Q\Dot X + q\tran x= x\tran Q x +q\tran x$, the original objective. Rewritten in this second form, the problem of solving
BoxQP becomes the problem of obtaining  a tractable characterization, or approximation, of $\QPB_n$.

In the sequel we will sometimes write $(x,X)\in\QPB_n$ to mean $Y(x,X)\in\QPB_n$ in order to reduce notation.
There are a variety of valid constraints on $Y(x,X)$ that are known for $(x,X)\in\QPB_n$.  Two examples are the PSD condition $Y(x,X)\gesem 0$, and
the constraints obtained by applying the reformulation-linearization technique (RLT) \cite{RLT}, also commonly referred to as the McCormick inequalities,
\begin{equation}\label{eq:RLT}
X_{ij} \le x_i,\quad X_{ij}\le x_j,\quad X_{ij}\ge 0, \quad X_{ij} \ge x_i+x_j-1,\quad 1\le i\le j\le n.
\end{equation}
It is obvious that the PSD condition $Y(x,X)\gesem 0$ implies the diagonal constraint $X_{ii}\ge 0$, and it is easy to show that the PSD condition also
implies $X_{ii}\ge 2x_i-1$.  In the sequel we will be assuming the PSD constraint $Y\gesem 0$, so we will use
DIAG to refer to the diagonal constraints $X_{ii}\le x_i$,  $i=1,\ldots,n$, and RLT to refer to the constraints in \eqref{eq:RLT} for $j>i$ with the addition
of the DIAG constraints.

It was shown in \cite{Anstreicher.Burer.2010} that $\QPB_2$ is exactly represented by the combination of the PSD and RLT constraints, which we will denote by PSD+RLT in the sequel,   and that for larger
$n$ the combination of these constraints usually gives a close approximation of the optimal value, if not the exact optimal value, for test instances of BoxQP problems. It was also shown in
\cite{Anstreicher.Burer.2010} that the PSD+RLT constraints are not sufficient to exactly characterize $\QPB_3$, but that an exact 
disjunctive representation for $\QPB_3$ can be obtained by utilizing a triangulation of $\Box_3$.
Subsequently
\cite{Burer.Letchford.2009}, extending results of \cite{Yajima.Fujie.1998}, showed that any constraint that is valid for the Boolean quadric polytope is also valid for $(x,X)\in\QPB_n$.  In our setting
we consider the Boolean quadric polytope to be the set
$\BQP_n=\Co\set{(1,x\tran)\tran(1, x\tran)\suchthat x_i\in\set{0,1}, i=1,\ldots,n}$. It is then clear that $\BQP_n\subset\QPB_n$. Note that if
$(x,X)\in\BQP_n$ then $X_{ii}=x_i$, $i=1,\ldots,n$, and $X$ is certainly a symmetric matrix. In the literature for polyhedral methods, for example
\cite{Padberg.1989}, the variables for $\BQP_n$ are typically taken to be $x$ and $\set{X_{ij},\ 1\le i<j\le n}$.

There are many known constraints that are valid for $\BQP_n$, including the RLT constraints and the triangle inequalities (TRI)
\begin{equation}\label{eq:TRI}
\begin{array}{rcl}
X_{ij}+X_{ik} &\le& x_i+X_{jk},\\ 
X_{ij}+X_{jk} &\le& x_j+X_{ik},\\
X_{ik}+X_{jk} &\le& x_k+X_{ij},\\
x_i+x_j+x_k &\le& X_{ij}+X_{ik}+X_{jk}+1
\end{array}
\end{equation}
$1\le i<j<k\le n$. It is also known \cite{Padberg.1989} that $\BQP_3$ is fully characterized by the RLT and TRI constraints.  Adding the TRI
constraints to PSD+RLT strengthens the approximation of $\QPB_3$, but it is shown in \cite{Burer.Letchford.2009} that the combination of these constraints, denoted PSD+RLT+TRI in the sequel, is not sufficient to fully characterize $\QPB_3$.

The goal of this paper is to strengthen the approximation of $\QPB_3$ that is given by the PSD+RLT+TRI relaxation.  Our methodology is based on 
extracting information from the disjunctive representation of $\QPB_3$ from \cite{Anstreicher.Burer.2010} to derive additional valid constraints. This
process is described in Section \ref{sec:ETRI1} and results in a total of 24 new valid inequalities on $(x,X)\in\QPB_3$. We refer to these as ETRI1 constraints,
where ``ETRI" stands for ``extended triangle inequality."   In Section \ref{sec:ETRI2/3} we consider a strengthening of the process used in
Section \ref{sec:ETRI1} that results in an additional 24 ETRI2 inequalities and 48 ETRI3 inequalities.  In Section \ref{sec:SOC} we derive conic strengthenings of the ETRI1/2/3 inequalities that utilize second-order cone (SOC) constraints and one additional variable. In Section \ref{sec:comp} we give computational 
results applying the ETRI constraints and their conic strengthenings to a variety of BoxQP instances.  For $n=3$ we show that the conic strengthening of
the ETRI1 constraints obtains the exact solution value for the counterexample from \cite{Burer.Letchford.2009} used to demonstrate that the PSD+RLT+TRI
relaxation does not fully characterize $\QPB_3$.   We also show, based on extensive computations, that the conic strengthening of the ETRI constraints reduces
the worst-case gap for a normalized objective by better than a factor of 2, compared to the PSD+RLT+TRI relaxation of $\QPB_3$. 
 For a set of BoxQP instances
with $5\le n\le 10$, where the PSD+RLT+TRI relaxation is not tight, we show that applying the ETRI constraints and their conic strengthenings reduces the gap and almost always obtains the true optimal solution value.

\noindent{\bf Notation:}  We use $\Scal_n$ to denote symmetric $n\times n$ matrices and $\Scal_n^+$ to denote symmetric positive semidefinite matrices. For $A$ and $B$ both in $S_n$, $A\gesem B$ denotes that $A-B\in \Scal_n^+$ and $A\Dot B$ denotes the trace inner product $A\Dot B = \tr(AB)$.  We use $e$ to denote a vector of arbitrary dimension with each component equal to one and $e_i$ to denote a vector with a 1 in the $i$th coordinate and all other entries
equal to zero. 

\section{Extended triangle inequalities}\label{sec:ETRI1}

In this section we derive new valid inequalities for $\QPB_3$.  The construction of these inequalities is based on a disjunctive representation for $\QPB_3$ given in \cite{Anstreicher.Burer.2010}.  To describe this representation, let $\Tcal_{ijk}\subset
\Box_3$ be the simplex $\set{0\le x_i\le x_j \le x_k\le 1}$. There are 6 such simpleces corresponding to different ordering of the
variables $(x_1,x_2,x_3)$, and these 6 simpleces triangulate $\Box_3$.  For convenience we can put these orderings in lexicographic order as $(123, 132, 213, 231, 312, 321)$, so the first ordering is 123, the fifth is 312, etc. For the $p^{\rm th}$
ordering, corresponding to $ x_i\le x_j \le x_k$, let
$A_p$ be a $3\times 4$ matrix whose columns are the extreme points of $\Tcal_{ijk}$, where the order of the columns in $A_p$ is arbitrary.  For example, a matrix $A_1$ corresponding
to the ordering $0 \le x_1\le x_2\le x_3 \le 1$ is
\[
A_1 = \begin{pmatrix} 1 & 0 & 0 & 0 \\ 1 & 1 & 0 & 0 \\ 1 & 1 & 1 & 0 \end{pmatrix}.
\]
For each such $A_p$ let $\Abar_p$ be the matrix 
\[
\Abar_p = \begin{pmatrix} e\tran \\ A_p \end{pmatrix}.
\]
It is then proved in \cite{Anstreicher.Burer.2010} that
\begin{equation}\label{eq:disjunctive}
\QPB_3  = \left\{ \sum_{p=1}^6 \Abar_p X_p \Abar_p\tran \suchthat X_p\in\DNN_4, e\tran X_p e = \lam_p,  e\tran \lam = 1\right\},
\end{equation}
where $\DNN_4$ are $4\times 4$ matrices that are doubly nonnegative; that is, componentwise nonnegative and 
PSD.  

The representation \eqref{eq:disjunctive} is computable, but cannot be reduced to a system of constraints in the original variables $(x,X)$ due to the constraints $X_p\gesem 0$, $p=1,\ldots,6$. However, by dropping these constraints we obtain a polyhedral set
\begin{equation}\label{eq:P_0}
\Pcal_0=\left\{ \sum_{p=1}^6 \Abar_p X_p \Abar_p\tran \suchthat X_p\ge 0, e\tran X_p e = \lam_p,  e\tran \lam = 1\right\},
\end{equation}
and it is certainly then the case that $\QPB_3\subset \Pcal_0 \intersect \Scal_4^+$. Note that the set $\Pcal_0$ is the convex hull of the union of 6 polyhedral sets of the form
\[
\left\{\Abar_p X_p \Abar_p\tran \suchthat  X_p\ge 0, e\tran X_p e = 1\right\},
\]
each corresponding to an ordering of the variables $\set{x_1, x_2, x_3}$. For any one such ordering, the extreme point matrices of this set are
\begin{equation}\label{eq:extreme_points}
\set{\Abar_p (E_{ij}+E_{ji}) \Abar_p\tran/2\suchthat 1\le i\le j \le 4},
\end{equation}
where $E_{ij}$ is a $4\times 4$ matrix with a one in the $(i,j)$ position and zeros elsewhere.  There are 10 such
extreme points, 4 coming from the diagonal components $i=j$ and 6 from the off-diagonal components $i<j$. There are 9 variables associated with the matrices $Y(x,X)\in\BQP_3$, corresponding to $(x_1, x_2, x_3)$,
$(X_{11}, X_{22}, X_{33})$ and $(X_{12}, X_{13}, X_{23})$, so these 10 extreme points represent a 
simplex in $\Rbb^9$.  It is also possible to give the hyperplane description of this simplex.  To do this, for
a given matrix $\Abar_p$, let $M_p=\Abar_p^{-1}$.  Then for 
\[
Y= \begin{pmatrix} 1 & x\tran \\ x & X \end{pmatrix} = \Abar_p X_p\Abar_p\tran,
\]
 $X_p\ge 0 \iff M_pYM_p \ge 0$, so a hyperplane description of the simplex is given by
\begin{equation}
M_p Y M_p\tran \ge 0,\label{eq:Mhyperplanes}
\end{equation}
For example, for the matrix $\Abar_1$ corresponding to the ordering
$0\le x_1\le x_2\le x_3\le 1$, 
\[
M_1 = \begin{pmatrix} 0  &1 &0& 0\\
              0 &-1& 1& 0\\
              0 &0 &-1& 1\\
              1& 0& 0 &-1\end{pmatrix}.
 \]
 The diagonal constraints from \eqref{eq:Mhyperplanes} are then
 \[
 X_{11} \ge 0,\ X_{11}+X_{22} - 2X_{12} \ge 0,\ X_{22}+X_{33}-2X_{23}\ge 0,\ 1+ X_{33} - 2x_3\ge 0,
 \]
 and these constraints are all implied by the condition $Y\gesem 0$.  The off-diagonal 
constraints from \eqref{eq:Mhyperplanes} are
 \begin{eqnarray*}
 X_{12} - X_{11}\  \ge\ 0, \  X_{13} - X_{12}  &\ge& 0,\ x_1 - X_{13}\ \ge\ 0,\\
 X_{12} + X_{23} - X_{13} - X_{22} &\ge& 0,\\
 x_2 - x_1 +X_{13} - X_{23} &\ge& 0,\\
  x_3 - x_2 +X_{23} - X_{33} &\ge& 0.
  \end{eqnarray*}
 and by direct computation these hyperplanes are exactly the RLT constraints that can be formed
 from the ordering constraints $0\le x_1\le x_2\le x_3\le 1$.
 
 We next set out to obtain a hyperplane description of the set $\Pcal_0$.  To do this, we took the union of the extreme points
 from each of the 6 sets as in \eqref{eq:extreme_points} and then used Polymake  to obtain
 a hyperplane description of the convex hull of this set. This resulted in a total of 72 facet-defining constraints which included all permutation equivalences, to be expected given the form of the disjunction used to construct $\Pcal_0$. The 72 constraints included:
 \begin{itemize}
 \item All of the diagonal and off-diagonal RLT constraints,
 \item The first three of the four triangle inequalities \eqref{eq:TRI},
 \item Nine constraints that were clearly implied by the PSD condition $Y\gesem 0$.
 \end{itemize}
We next checked computationally which constraints were {\em not} dominated by the PSD, RLT and TRI constraints, and found that there were 24 such constraints, three of which are:
 \begin{eqnarray}
2x_1 + X_{11} - 2X_{12} - 2X_{13} + X_{23} &\ge& 0,\nonumber\\
2x_2 - 2X_{12} + X_{13} + X_{22} - 2X_{23} &\ge& 0,\label{eq:ETRI1}\\
2x_3 + X_{12} - 2X_{13} - 2X_{23} + X_{33} &\ge& 0.\nonumber
\end{eqnarray}

Before proceeding, we note that any linear constraint that is valid for $(x,X)\in\QPB_3$ implies other valid constraints that can be constructed by combining
two operations; permuting the indices of variables and switching (or complementing) variables, where switching a variable $x_i$ refers to replacing $x_i$ with $1-x_i$.
Switching $x_i$ results in $X_{ij}$ being replaced by $x_j-X_{ij}$ and similarly switching $x_j$ results in  $X_{ij}$ being replaced by $x_i-X_{ij}$.
Switching both $x_i$ and $x_j$ results in $X_{ij}$ being replaced by $X_{ij}+1 - x_i - x_j$.
The constraints in \eqref{eq:ETRI1} are clearly equivalent under permutation of indices.  Since $3\times 8=24$, one might expect that the remaining 21 constraints corresponded to switching variables in these constraints, but that was not the case. However we determined that the remaining 21 constraints were all dominated by the PSD, RLT and TRI constraints combined with the constraints
in \eqref{eq:ETRI1} and their switchings, a total of 24 additional constraints. We refer to these constraints as ETRI1 constraints, for ``extended triangle inequalities, type 1."  For completeness  we give the coefficients for
all 24 of these ETRI1 constraints in the Appendix. 

We next describe a simple derivation for the constraints in \eqref{eq:ETRI1} that is independent of how we actually found them. For example, to derive the first constraint in \eqref{eq:ETRI1}, consider the valid constraint 
$x_2x_3 \ge x_2+x_3 -1$.  Multiplying both sides by $x_1$ results in the constraint
$x_1(x_2 + x_3 -1) \le x_1 x_2x_3 \le x_2x_3$, which implies the ordinary triangle inequality
$X_{12} + X_{13} \le x_1+X_{23}$. However it is also true that $2x_1 x_2x_3\le x_1^2 +(x_2x_3)^2$, so
\[
2x_1(x_2 + x_3 -1) \le 2x_1x_2x_3 \le x_1^2 +(x_2x_3)^2 \le x_1^2 + x_2x_3,
\]
which implies the ETRI1 constraint $2x_1 +X_{11} -2X_{12} - 2X_{13}  + X_{23} \ge 0$.

Considering  $\set{(x_1, x_2, x_3,X_{11}, X_{22}, X_{33},X_{12}, X_{13}, X_{23})\suchthat (x,X)\in \QPB_3}$ to be a subset of
 $\Rbb^9$, it is known \cite{Burer.Letchford.2009} that this set is full-dimensional, and that faces corresponding to the RLT and TRI constraints being satisfied with equality correspond to facets of dimension 8. In the next lemma we prove that the ETRI1 constraints are tight on faces of dimension 5.

\begin{lemma}\label{lem:ETRI1_dim}
 The set of $\set{x_1, x_2, x_3,X_{11}, X_{22}, X_{33},X_{12}, X_{13}, X_{23}}\subset\Rbb^9$ with $(x,X)\in \QPB_3$ 
 that also satisfy $2x_1 + X_{11} - 2X_{12} - 2X_{13} + X_{23}=0$ has dimension 5.
 \end{lemma}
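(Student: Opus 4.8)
The plan is to establish the dimension of the face by exhibiting an explicit set of affinely independent points in $\QPB_3$ that lie on the hyperplane $2x_1 + X_{11} - 2X_{12} - 2X_{13} + X_{23}=0$, and then arguing that the dimension cannot exceed $5$. Recall from the derivation in the text that this ETRI1 constraint arises by chaining together three inequalities: $x_1(x_2+x_3-1)\le x_1 x_2 x_3$, then $2x_1 x_2 x_3 \le x_1^2 + (x_2x_3)^2$, and finally $(x_2x_3)^2\le x_2 x_3$. Equality in the assembled constraint therefore forces equality in each link simultaneously. The key observation is that equality in $2x_1 x_2 x_3 = x_1^2 + (x_2 x_3)^2$ (an AM--GM / square step) holds precisely when $x_1 = x_2 x_3$, while equality in $(x_2 x_3)^2 = x_2 x_3$ holds when $x_2 x_3 \in\{0,1\}$. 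Combining these, rank-one points $(x,xx\tran)$ on the face must satisfy $x_1 = x_2 x_3 \in \{0,1\}$, i.e. either $x_1 = x_2 x_3 = 0$ or $x_1 = x_2 = x_3 = 1$.

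First I would characterize the rank-one generators lying on the face. The analysis above shows that for a rank-one point we need $x_2 x_3 = x_1$ together with $x_2 x_3 \in \{0,1\}$. The branch $x_2 x_3 = 1$ gives the single vertex $x=(1,1,1)$. The branch $x_2 x_3 = 0 = x_1$ gives the two-dimensional family $\{(0,x_2,x_3) : x_2 x_3 = 0,\ 0\le x_2,x_3\le 1\}$, which is the union of two line segments in $\Box_3$ (namely $x_1=x_3=0$ and $x_1=x_2=0$). The rank-one points over this locus therefore sweep out a low-dimensional set, and on its own this family of extreme points cannot generate a $5$-dimensional face. The point is that $\QPB_3$ is the convex hull of \emph{all} rank-one matrices $(x,xx\tran)$, and the face is the convex hull of exactly those rank-one generators satisfying the equality, since the inequality holds on every generator and the functional is linear. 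So I would enumerate enough such rank-one generators $(x,xx\tran)$, compute their images in $\Rbb^9$, and exhibit $6$ affinely independent ones to get dimension $\ge 5$; candidate generators include $(0,0,0)$, $(0,1,0)$, $(0,0,1)$, $(0,1/2,0)$-type interior points of the segments, and $(1,1,1)$, checking each satisfies the equality and that their $9$-vectors span an affine space of dimension $5$.

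For the upper bound I would argue that the equality constraint, together with the constraints it forces equality in along the derivation chain, collapses several of the nine coordinates. Concretely, on the face every generator has $x_1 \in\{0,1\}$, $X_{11}=x_1$ (since $x_1^2=x_1$ on $\{0,1\}$), and $X_{12}=X_{13}=0$ when $x_1=0$ or $X_{12}=x_2, X_{13}=x_3$ when $x_1=1$; these affine relations persist under convex combination only insofar as they are linear, so I would track which linear equalities are implied on the whole face. The cleanest route is to note the face is the convex hull of the generators found above and simply compute the affine hull of that explicit finite-plus-segment generator set, confirming its dimension is exactly $5$ by producing both $6$ affinely independent points and a system of $3$ independent affine equations (the ambient space is $9$-dimensional, the defining hyperplane is one equation, and I expect two further induced equalities) that every face point satisfies.

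The main obstacle will be the upper bound rather than the lower bound: producing affinely independent points is routine, but I must rigorously identify \emph{all} the affine equalities that are forced on the face and verify there are exactly three independent ones, so that $9-3-1$... more precisely so that the affine hull has dimension $5$. The subtlety is that equality in the nonlinear chain constrains the rank-one generators, but the face itself is their convex hull, and a linear functional can vanish on a convex hull without vanishing term-by-term only in the trivial direction here; I would need to confirm that the induced equalities (beyond the defining one) are genuinely linear consequences valid on all convex combinations, not merely on the extreme generators. Verifying the exact count of independent induced equalities, and hence pinning the dimension at $5$ rather than $4$ or $6$, is where the careful bookkeeping lies.
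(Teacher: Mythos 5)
Your proposal is correct in substance and follows the same overall skeleton as the paper's proof: reduce to rank-one points $(x,xx\tran)$ (the face of a convex hull under a valid linear inequality is the convex hull of the generators on which it is tight -- a step the paper asserts and you justify more explicitly), exhibit six affinely independent such points for the lower bound, and show the rank-one locus is too small to give dimension greater than $5$. Where you genuinely differ is in how the rank-one points on the face are characterized. The paper treats $2x_1+x_1^2-2x_1x_2-2x_1x_3+x_2x_3=0$ as a quadratic in $x_1$, writes out the two roots, and does a case analysis on $x_2+x_3\ge 1$ versus $x_2+x_3<1$. You instead observe that equality in the assembled constraint forces equality in every link of the derivation chain $2x_1(x_2+x_3-1)\le 2x_1x_2x_3\le x_1^2+(x_2x_3)^2\le x_1^2+x_2x_3$, yielding $x_1=x_2x_3\in\set{0,1}$ directly; this is cleaner, avoids the discriminant analysis, and recovers exactly the paper's answer (the vertex $(1,1,1)$ together with the two edges $x_1=x_2=0$ and $x_1=x_3=0$). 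One small correction for your upper-bound bookkeeping: the affine hull of the generator set (the two lifted parabolic arcs $(0,t,0,0,t^2,0,0,0,0)$ and $(0,0,t,0,0,t^2,0,0,0)$ plus the all-ones point) is cut out by \emph{four} independent affine equations, namely $X_{12}=X_{13}=0$, $x_1=X_{11}$, $X_{11}=X_{23}$ (or any equivalent basis containing the defining ETRI1 hyperplane), not the three you anticipated; with four equations the dimension is $9-4=5$ as required, whereas three would give $6$. You flagged this count as the delicate point, and carrying out the affine-hull computation you describe does resolve it; the paper sidesteps it by noting that all tight rank-one points other than $(1,1,1)$ lie in the $4$-dimensional face $x_1=X_{11}=X_{12}=X_{13}=X_{23}=0$, so adjoining one more point gives at most $5$.
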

 
 \begin{proof}
 To characterize the dimension of the face it suffices to consider points where $X_{ij}=x_ix_j$ for all $(i,j)$.
 We first consider the points used in \cite{Burer.Letchford.2009} to prove that the set of feasible points is full dimensional. Of these, six satisfy the additional constraint with equality:
 \begin{itemize}
 \item The point with all variables equal to 0;
 \item The point having $x_2=X_{22}=1$ (respectively $x_3=X_{33}=1$) and all other variables equal to 0;
 \item The point having $x_2=\frac{1}{2}$, $X_{22}=\frac{1}{4}$ (respectively $x_3=\frac{1}{2}$, $X_{33}=\frac{1}{4}$) and
 all other variables equal to 0;
 \item The point having all variables equal to 1.
 \end{itemize}
 Since these 6 points are affinely independent, the face on which the constraint is tight has dimension at least 5. To show that the
 dimension is no higher, consider the equation $2x_1 + x_1^2 - 2x_1x_2 - 2x_1x_3 + x_2x_3=0$ as a quadratic equation
 in $x_1$,
 \begin{equation}\label{eq:x1_quadratic}
 x_1^2 -2x_1(x_2+x_3-1) + x_2x_3 =0.
 \end{equation}
 This equation has a solution if and only if $(x_2+x_3-1)^2\ge x_2x_3$, in which case there are two possible
 values for $x_1$,
 \begin{equation}\label{eq:x1_solutions}
 \begin{array}{rcl}
 x_1^+ &=& x_2+x_3 -1 + \sqrt{(x_2+x_3-1)^2 - x_2x_3},\\
  x_1^- &=& x_2+x_3 -1 - \sqrt{(x_2+x_3-1)^2 - x_2x_3}.
  \end{array}
  \end{equation}
  Assume first that $x_2+x_3\ge 1$.  Since both variables are in $[0,1]$ we have $x_2x_3 \ge x_2+x_3-1\ge 0$,
  so $(x_2+x_3-1)^2 \le (x_2x_3)^2 \le x_2x_3$. We must then have $x_1^+=x_1^-$, and either $x_2x_3=0$ or
  $x_2x_3=1$.  In the first case we then have either $x_2=0, x_3=1$ or $x_2=1, x_3=0$ (implying $x_1=0$), or
  $x_2=x_3=1$, implying $x_1=1$ as well. Assume alternatively that $x_2+x_3 <1$. Then $x_1^-<0$, $x_1^+$ cannot be strictly positive, and to have $x_1^+=0$ requires $x_2x_3=0$, implying either $x_2=0$ or $x_3=0$.  Thus all points
  that satisfy the constraint with equality, other than the point with all variables equal to 1, are in the face with
  $x_1=X_{11}=X_{12}=X_{13}=X_{23}=0$, the last since either $x_2=0$ or $x_3=0$. This face has dimension at most 4, so adding the point with all variables equal to one, the set of points satisfying the constraint with equality can have dimension no greater than 5.
  \end{proof}
  
 Geometrically, the set of $(x_1,x_2,x_3)\in\Box_3$ satisfying \eqref{eq:x1_quadratic} corresponds to the vertex
 $(0,0,0)$, the two adjacent edges with $x_1=x_2=0$ and $x_1=x_3=0$, and the opposite vertex $(1,1,1)$. In $\Box_3$ there are 24 possible choices for 2 adjacent edges, corresponding exactly to the 24 
 ETRI1 constraints obtained from \eqref{eq:ETRI1} and their switchings.  
   
 In addition to the dimension of the face corresponding to a constraint being tight, it is interesting to consider the maximum
 violation of a constraint when it is not imposed.  In Table \ref{tab:violations_I} we consider several cases of increasingly tight
 constraints that contain $\QPB_3$, and the maximum violations of other constraints on these sets.  In the first case, the DIAG constraints are
 added to the PSD condition in order to bound the feasible region. In addition to the
 unscaled maximum violations, we also consider the maximum violations for constraints scaled so that the coefficient
 vector for the variables $(x_1, x_2, x_3,X_{11}, X_{22}, X_{33},X_{12}, X_{13}, X_{23})$ is equal to one. The maximum violations are invariant to switching, but the norm of the coefficient vector may vary.  For example, switching the variable $x_1$ in the RLT constraint $X_{12}\ge 0$ results in the RLT constraint $x_1 - X_{12} \ge 0$. Similarly, switchings of the ETRI1 constraints
\eqref{eq:ETRI1} do not all result in coefficient vectors of the same norm, and the minimum norm is $\sqrt{11}$ rather
than $\sqrt{14}$ for the constraints in \eqref{eq:ETRI1}; see Table \ref{tab:etri1} in the Appendix for details. The normalized violation has a
simple geometric interpretation as the Euclidean distance that a constraint hyperplane which is tight on the relaxed constraint set must be shifted to be tight on $\QPB_3$. 
  
\begin{table}
\begin{center}
\caption{Maximum constraint violations I}
\vspace{.2cm}
\label{tab:violations_I}
\begin{tabular}{r|rrr|rrr}
Enforced& \multicolumn{3}{c|}{Max violation } & \multicolumn{3}{c}{Max normalized} \\ 
Constraints&RLT  & TRI & ETRI1 &RLT & TRI & ETRI1 \\ \hline
PSD+DIAG  & 0.1250  &0.1250 & 0.1250 & 0.1250 & 0.0625 & 0.0377 \\
PSD+RLT & 0 & 0.1250& 0.1111 & 0 & 0.0625 & 0.0335\\
PSD+RLT+TRI & 0 & 0 & 0.0625 & 0 & 0 & 0.0188
\end{tabular}
\end{center}
\end{table}

Finally, note that the derivation of the ETRI1 constraints above was based on a particular triangulation of $\Box_3$; this triangulation is based on
orderings of the variables and has 6 simpleces. It is known \cite{DeLoera.Rambau.Santos.2010} that there are six equivalence classes for triangulations of $\Box_3$, five of which have 6 simpleces and one with only 5 simpleces.  To investigate a possible dependence on the triangulation used, we repeated the derivation of constraints described above using
a triangulation from each of the other 5 equivalence classes. The different triangulations have different permutation and/or switching symmetries, 
and the exact constraints obtained vary with the triangulation used.  However we determined that in each case, all constraints obtained using a different triangulation are implied by the PSD,RLT,TRI and ETRI1 constraints as derived above, and moreover the PSD, RLT and TRI constraints together with the constraints derived from another triangulation, with permutations and switchings, imply the ETRI1 constraints derived above.  We therefore conclude that the
construction of the ETRI1 constraints is independent of the triangulation used. 

 \section{More extended triangle inequalities}\label{sec:ETRI2/3}
 
 The derivation of the ETRI1 inequalities in the previous section was based on dropping the constraints that
 $X_p\gesem 0$ in the disjunctive representation \eqref{eq:disjunctive}, resulting in the set $\Pcal_0$ \eqref{eq:P_0}.  In
 this we consider a strengthening of this procedure based on adding constraints on the $X_p$ matrices from
 \eqref{eq:disjunctive}.  Clearly any constraint of the form $a\tran X_p a\ge 0$ is valid, since we are relaxing the condition that $X_p\gesem 0$. Our choice of constraints to add is based on the form of the extreme point matrices in \eqref{eq:extreme_points}.  As described below \eqref{eq:extreme_points}, 4 of these matrices correspond to diagonal components $i=j$, each resulting in a matrix of the form $\Abar_p E_{ii} \Abar_p\tran$ which is PSD.  The remaining 6 matrices, corresponding to $i<j$, result in matrices of
 the form $\Abar_p (E_{ij}+E_{ji}) \Abar_p\tran/2$, each of which has one positive and one negative eigenvalue, where the latter is due to an eigenvalue of $-1$, with an eigenvector of the form $e_i-e_j$, from the matrix $(E_{ij}+E_{ji})$.  Our idea is to add constraints to $X_p$ which cut off these extreme points by using the eigenvectors corresponding to negative eigenvalues. To this end, let
 \[
  U = \begin{pmatrix}
          1 &-1 &0 & 0\\
          1 & 0 & -1 & 0\\
          1 & 0 & 0 & -1\\
          0 & 1 & -1 & 0\\
          0 & 1 & 0 & -1\\
          0 & 0 & 1 & -1\end{pmatrix}.
          \]
 The rows of $U$ can then be used to generate valid constraints that are violated by the 6 extreme points
 from \eqref{eq:extreme_points} with $i<j$. For each $1\le p\le 6$, the result is then a polyhedral set of the form 
\begin{equation}\label{eq:P_1_sets}
\left\{\Abar_p X_p \Abar_p\tran \suchthat  X_p\ge 0, e\tran X_p e = 1,\ 
u_i\tran X_p u_i \ge 0, i= 1,\ldots, 6\right\},
\end{equation}
where $u_i\tran$ denotes the $i$th row of $U$. Taking the convex combination of the resulting six sets results in a new polyhedral set $\Pcal_1\subset\Pcal_0$,
\begin{equation}\label{eq:P_1}
\Pcal_1 = \left\{ \sum_{p=1}^6 \Abar_p X_p \Abar_p\tran \suchthat  X_p\ge 0, e\tran X_p e = \lam_p, e\tran \lam=1,
u_i\tran X_p u_i \ge 0,\ i= 1,\ldots, 6\right\}.
\end{equation}
 
 To obtain an explicit hyperplane description of $\Pcal_1$ we used a procedure similar to what was done in the previous
 section for $\Pcal_0$. To start, it is not difficult to show that for each $p$, the extreme point matrices from
 \eqref{eq:P_1_sets} are of the form $A_p XA_p$, where $X$ is one of the following $4\times 4$ matrices:
 \begin{itemize}
 \item $X = E_{ii}$, $1\le i\le 4$,
 \item $X= E_{ii}/2 + (E_{ij} + E_{ji})/4$, $1\le i\ne j \le 4$,
 \item $X=E_{ii}/3 + (E_{ij} + E_{ji} + E_{ik} + E_{ki})/6$, $1\le i\ne j < k\ne i\le 4$,
 \item $X= (e_i e\tran + e e_i\tran)/8$, $1\le i\le 4$.
 \end{itemize}
 
There are a total of 32 such extreme matrices for each $p$. We then used Polymake to obtain a hyperplane description of the convex hull of the union of these sets of extreme matrices.
The result of the above procedure was a system of 3723 inequality constraints.  We then checked numerically to see which of these constraints were dominated by the PSD, RLT, TRI and ETRI1 constraints together.  The result was a set of 330 non-dominated constraints, including the following 9 constraints:
 
 \begin{eqnarray}
4x_1 + 4X_{11} - 4X_{12} - 4X_{13} + X_{23} &\ge& 0\nonumber\\     
4x_2 - 4X_{12} + X_{13} + 4X_{22} - 4X_{23} &\ge& 0 \label{eq:ETRI2}\\
4x_3 + X_{12} - 4X_{13} - 4X_{23} + 4X_{33} &\ge& 0  \nonumber  \\
  \nonumber \\
4x_1 + 4X_{11} - 8X_{12} - 4X_{13} + X_{22} + 3X_{23} &\ge& 0\nonumber \\
4x_1 + 4X_{11} - 4X_{12} - 8X_{13} + 3X_{23} + X_{33} &\ge& 0\nonumber \\
4x_2 + X_{11} - 8X_{12} + 3X_{13} + 4X_{22} - 4X_{23} &\ge& 0\label{eq:ETRI3}\\
4x_2 - 4X_{12} + 3X_{13} + 4X_{22} - 8X_{23} + X_{33} &\ge& 0\nonumber \\
4x_3 + X_{11} + 3X_{12} - 8X_{13} - 4X_{23} + 4X_{33} &\ge& 0\nonumber \\
4x_3 + 3X_{12} - 4X_{13} + X_{22} - 8X_{23} + 4X_{33} &\ge& 0\nonumber
\end{eqnarray}

We refer to the constraints in \eqref{eq:ETRI2} and their switchings (a total of 24 constraints) as ETRI2 constraints, and the constraints in 
\eqref{eq:ETRI3} and their switchings (a total of 48 constraints) as ETRI3 constraints.  Finally, we determined that all of the remaining constraints were dominated by the PSD, RLT, TRI, ETRI1, ETRI2 and ETRI3 constraints together. For completeness we
give the coefficients for all of the ETRI2 and ETRI3 constraints in the Appendix.

We next demonstrate the validity of the ETRI2 and ETRI3 constraints independently of how they were derived, as we did for the ETRI1 constraints in the previous section.  Multiplying the constraint $x_2x_3 \ge x_2+x_3-1$ by $4x_1$, we obtain the valid constraint $4x_1x_2+ 4x_1x_3-4x_1 \le 4x_1x_2x_3$. But $(2x_1-x_2x_3)^2 = 4x_1^2 -4x_1x_2x_3 +( x_2x_3)^2\ge 0$, implying $4x_1x_2x_3 \le 4x_1^2+(x_2x_3)^2 \le 4x_1^2 + x_2x_3$. Combining these facts we obtain
$4X_{12}+ 4X_{13}-4x_1 \le 4X_{11} + X_{23}$, which is  the first constraint in
\eqref{eq:ETRI2}. To derive the first constraint in \eqref{eq:ETRI3}, we add $4x_1x_2$ to both sides of 
$4x_1x_2+ 4x_1x_3-4x_1 \le 4x_1x_2x_3$, resulting in $8x_1x_2+ 4x_1x_3-4x_1 \le 4x_1x_2x_3 + 4x_1x_2$. Then
$(2x_1 - x_2(1+x_3))^2 = 4x_1^2 - 4x_1x_2(1+x_3) + x_2^2(1+x_3)^2\ge 0$ implies that
$4x_1x_2x_3 + 4x_1x_2 = 4x_1x_2(1+x_3)\le 4x_1^2 + x_2^2(1+x_3)^2=4x_1^2 + x_2^2+ 2x_2^2x_3 + x_2^2x_3^2
\le 4x_1^2 +x_2^2+3x_2x_3$, and combining these facts we obtain the constraint 
$8X_{12}+ 4X_{13}-4x_1\le 4X_{11} + X_{22}+ 3X_{23}$.

Next we consider the dimensions of the faces of $\QPB_3$ corresponding to the ETRI2 or ETRI3 constraints being tight.

\begin{lemma}\label{lem:ETRI2_dim}
 The set of $\set{x_1, x_2, x_3,X_{11}, X_{22}, X_{33},X_{12}, X_{13}, X_{23}}\subset\Rbb^9$ with $(x,X)\in\QPB_3$ 
 that also satisfy $4x_1 + 4X_{11} - 4X_{12} - 4X_{13} + X_{23}=0$ has dimension 5.
 \end{lemma}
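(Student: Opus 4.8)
The plan is to follow the template of the proof of Lemma~\ref{lem:ETRI1_dim}. Since $\QPB_3$ is the convex hull of the rank-one matrices $Y(x,xx\tran)$ with $x\in\Box_3$, any face is the convex hull of the extreme points it contains, so it again suffices to restrict attention to points with $X_{ij}=x_ix_j$ and to compute the affine dimension of the image of the admissible $x\in\Box_3$ under the map $x\mapsto(x_1,x_2,x_3,x_1^2,x_2^2,x_3^2,x_1x_2,x_1x_3,x_2x_3)\in\Rbb^9$. Imposing $X_{ij}=x_ix_j$, the defining equality becomes the quadratic in $x_1$
\[
4x_1^2 - 4x_1(x_2+x_3-1) + x_2x_3 = 0,
\]
so the analysis reduces to describing which $x\in\Box_3$ solve this equation.

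For the lower bound I would exhibit six affinely independent feasible points on the face. The points $(0,0,0)$, $(0,1,0)$, $(0,0,1)$, $(0,\tfrac12,0)$ and $(0,0,\tfrac12)$ all solve the quadratic, each lying on one of the two coordinate edges $x_1=x_2=0$ or $x_1=x_3=0$, and one checks directly that $(\tfrac12,1,1)$ does as well. The first five points move only the pairs $(x_2,X_{22})$ and $(x_3,X_{33})$, and do so along a parabola rather than a line, so their images are linearly independent; the sixth has $x_1=\tfrac12\neq0$ while all the others have $x_1=0$, hence it is independent of them. This yields affine dimension at least $5$. The qualitative novelty relative to Lemma~\ref{lem:ETRI1_dim} is that the ``far'' vertex $(1,1,1)$ is here replaced by the isolated point $(\tfrac12,1,1)$, since $(1,1,1)$ no longer satisfies the ETRI2 equality.

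For the upper bound I would analyze the quadratic through its discriminant condition $(x_2+x_3-1)^2\ge x_2x_3$ together with the box constraints, splitting on the sign of $x_2+x_3-1$. When $x_2+x_3\ge1$, the RLT inequality $x_2x_3\ge x_2+x_3-1\ge0$ combined with $x_2x_3\le1$ forces $(x_2+x_3-1)^2\le x_2x_3$, so a real root requires equality throughout; this pins $x_2x_3\in\set{0,1}$ and leaves only the edge endpoints $(0,1,0)$, $(0,0,1)$ and the isolated point $(\tfrac12,1,1)$. When $x_2+x_3<1$, the smaller root is negative and nonnegativity of the larger root forces $x_2x_3=0$, hence $x_1=0$ with $x_2=0$ or $x_3=0$. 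Consequently every feasible point other than $(\tfrac12,1,1)$ lies in the face $x_1=X_{11}=X_{12}=X_{13}=X_{23}=0$, which has dimension at most $4$; adjoining the single extra point $(\tfrac12,1,1)$ raises the dimension to at most $5$.

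The main obstacle is the squeeze argument in the case $x_2+x_3\ge1$: one must combine the RLT bound with $x_2x_3\le1$ to conclude the discriminant is nonpositive, and then extract exactly which boundary configurations survive. Correctly isolating the new point $(\tfrac12,1,1)$, which is absent from the ETRI1 analysis, and verifying that it contributes precisely one additional dimension, is the step that requires the most care.
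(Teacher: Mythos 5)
Your proposal is correct and follows essentially the same route as the paper: restrict to rank-one points, reduce the equality to the quadratic $4x_1^2-4x_1(x_2+x_3-1)+x_2x_3=0$ (whose roots are half the ETRI1 roots), exhibit the same six affinely independent points with $(1,1,1)$ replaced by $(\tfrac12,1,1)$, and reuse the discriminant/case analysis to show every other solution lies in the $4$-dimensional face $x_1=X_{11}=X_{12}=X_{13}=X_{23}=0$. The only difference is that you spell out the details the paper delegates to Lemma~\ref{lem:ETRI1_dim}.
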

 
 \begin{proof}
 
 The proof is very similar to the proof of Lemma \ref{lem:ETRI1_dim}.  We first give 6 affinely independent points 
 where $X_{ij}=x_ix_j$ for all $(i,j)$ that satisfy the constraint with equality.  These points are identical to the six points used in the proof of Lemma \ref{lem:ETRI1_dim}, except that the point with all variables equal to one is replaced by the point with
 $x_1=X_{12} = X_{13}=\frac{1}{2}$, $X_{11} = \frac{1}{4}$, $x_2=x_3=X_{22} = X_{33} = 1$.
 Since these 6 points are affinely independent, the face on which the constraint is tight has dimension at least 5. To show that the
dimension is no greater than 5, we consider $4x_1 + 4x_1^2 - 4x_1x_2 - 4x_1x_3 + x_2x_3=0$ to be a quadratic equation in 
$x_1$.  The solutions of this equation are then exactly the values of $x_1^+$ and $x_1^-$ in \eqref{eq:x1_solutions}
multiplied by one-half. The remainder of the proof is identical to that of Lemma \ref{lem:ETRI1_dim}, except that the solution
with $x_2=x_3=1$ has $x_1=\frac{1}{2}$ rather than $x_1=1$.
\end{proof}

\begin{lemma}\label{lem:ETRI3_dim}
 The set of $\set{x_1, x_2, x_3,X_{11}, X_{22}, X_{33},X_{12}, X_{13}, X_{23}}\subset\Rbb^9$ with $(x,X)\in\QPB_3$ 
 that also satisfy $4x_1 + 4X_{11} - 8X_{12} - 4X_{13} +X_{22} +3X_{23}=0$ has dimension 4.
 \end{lemma}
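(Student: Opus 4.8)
The plan is to follow the template of Lemmas \ref{lem:ETRI1_dim} and \ref{lem:ETRI2_dim}: first reduce to the rank-one extreme points of the face, then pin down the solution set exactly. As in the proof of Lemma \ref{lem:ETRI1_dim}, the face of $\QPB_3$ on which the constraint is tight is the convex hull of the extreme points of $\QPB_3$ lying on the supporting hyperplane, and these extreme points all have $X_{ij}=x_ix_j$; so it suffices to compute the affine dimension (in $\Rbb^9$) of the set of such rank-one points satisfying the equality. On this variety the constraint value $V=4x_1+4X_{11}-8X_{12}-4X_{13}+X_{22}+3X_{23}$ equals $4x_1+4x_1^2-8x_1x_2-4x_1x_3+x_2^2+3x_2x_3$, and the validity argument given just above can be rearranged into the exact identity $V=D_1+D_2+D_3$, where $D_1=4x_1(1-x_2)(1-x_3)$, $D_2=(2x_1-x_2(1+x_3))^2$, and $D_3=x_2x_3(3-2x_2-x_2x_3)$. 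Each $D_i$ is nonnegative on $\Box_3$ (for $D_3$ note $3-2x_2-x_2x_3\ge 0$), so the equality $V=0$ holds if and only if $D_1=D_2=D_3=0$ simultaneously.

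The second step is to enumerate these common zeros, which I expect to be routine. From $D_3=0$ one gets $x_2=0$, $x_3=0$, or $2x_2+x_2x_3=3$ (the last forcing $x_2=x_3=1$). Combining each case with $D_2=0$, which gives $2x_1=x_2(1+x_3)$, and with $D_1=0$: the case $x_2=0$ forces $x_1=0$ and leaves $x_3$ free; the case $x_3=0,\ x_2\ne 0$ forces $x_2=1$ and $x_1=\tfrac12$; and $x_2=x_3=1$ forces $x_1=1$. Thus the solution set collapses to the edge $\set{x_1=x_2=0,\ 0\le x_3\le 1}$ together with the two isolated points $x=(\tfrac12,1,0)$ and $x=(1,1,1)$.

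For the lower bound I would exhibit five affinely independent points of the solution set in $\Rbb^9$: the three edge images at $x_3=0,\tfrac12,1$, namely $(0,0,t,0,0,t^2,0,0,0)$, together with the rank-one lifts of $(\tfrac12,1,0)$ and $(1,1,1)$; a short elimination on the coordinates $x_1$, $x_3$, $X_{33}$ and $X_{13}$ shows affine independence, giving dimension at least $4$. The upper bound is where the real care is needed and is the essential difference from the previous two lemmas: the edge maps to the \emph{parabolic} arc $t\mapsto(0,0,t,0,0,t^2,0,0,0)$, whose affine hull is two-dimensional rather than one-dimensional. Since the full solution set is this arc plus two extra points, its affine hull has dimension at most $2+2=4$, and combined with the lower bound this gives dimension exactly $4$. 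The step I expect to demand the most attention is precisely this affine-hull count: one must resist treating the one-parameter edge as contributing a single dimension, and instead observe that the quadratic image $t\mapsto(t,t^2)$ already spans a plane — which is exactly what makes this face have dimension $4$ rather than the $3$ one might naively predict.
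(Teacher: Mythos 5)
Your proof is correct, and the characterization of the solution set you arrive at (the arc $\set{x_1=x_2=0,\ x_3\in[0,1]}$ plus the two isolated points $(\tfrac12,1,0)$ and $(1,1,1)$) is exactly the one the paper obtains; the lower-bound half even uses the same five affinely independent points. Where you genuinely diverge is the upper bound. The paper treats the constraint as a quadratic in $x_1$, writes down the roots $x_1^{\pm}$, then analyzes the discriminant as a second quadratic in $x_2$ with roots $x_2^{\pm}$, and works through a two-level case analysis ($2x_2+x_3\gtrless 1$, $x_2\le x_2^-$ versus $x_2\ge x_2^+$, etc.). You instead rearrange the paper's own validity derivation into the exact identity
\[
4x_1+4x_1^2-8x_1x_2-4x_1x_3+x_2^2+3x_2x_3 \;=\; 4x_1(1-x_2)(1-x_3)+\bigl(2x_1-x_2(1+x_3)\bigr)^2+x_2x_3(3-2x_2-x_2x_3),
\]
which I have verified expands correctly, and read off the zero set by forcing each nonnegative summand to vanish. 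This buys a much shorter and more transparent case analysis (three trivially resolved cases from $D_3=0$ versus the paper's nested discriminant computations), and it makes the source of each tightness condition visible; the cost is that you need the decomposition in hand, whereas the paper's quadratic-formula route is mechanical and would apply even if no such sum-of-nonnegatives identity were apparent. You also make explicit a step the paper leaves implicit: that the quadratic arc $t\mapsto(0,0,t,0,0,t^2,0,\dots)$ has a two-dimensional affine hull, so the affine hull of the full solution set is at most $2+2=4$. That is the right accounting and is exactly why this face has dimension $4$ rather than $5$.
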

 
 \begin{proof}
 
 The proof is similar to the proof of Lemma \ref{lem:ETRI1_dim}, but proving the upper bound for the dimension is more
 complex.  To begin, we give 5 affinely independent points 
 where $X_{ij}=x_ix_j$ for all $(i,j)$ that satisfy the constraint with equality:
 \begin{itemize}
 \item the point with all variables equal to 0;
 \item the point having $x_3=X_{33}=1$ and all other variables equal to 0;
 \item the point having $x_3=\frac{1}{2}$, $X_{33}=\frac{1}{4}$ and all other variables equal to 0;
 \item the point having $x_1=\frac{1}{2}$,  $X_{11} = \frac{1}{4}$, $x_2=X_{22}=1$, $X_{12}=\frac{1}{2}$ and all other variables equal to 0;
 \item the point with all variables equal to 1.
 \end{itemize}
 
 Since these 5 points are affinely independent, the face on which the constraint is tight has dimension at least 4. To show that the
dimension is no greater than 4, we consider $4x_1 + 4x_1^2 - 8x_1x_2 - 4x_1x_3 +x_2^2 +3x_2x_3=0$ to be a quadratic equation in $x_1$.  The possible roots of this quadratic are then
 \begin{eqnarray*}
 x_1^+ &=& \frac{1}{2}\left((2x_2+x_3 -1) + \sqrt{(2x_2+x_3 -1)^2 -x_2(x_2+3x_3)}\right),\\
  x_1^- &=& \frac{1}{2}\left((2x_2+x_3 -1) - \sqrt{(2x_2+x_3 -1)^2 -x_2(x_2+3x_3)}\right).
  \end{eqnarray*}
If $2x_2+x_3<1$ then $x_1^-<0$, and the only solutions with $x_1^+\ge 0$ have $x_2=0$, $x_3\in[0,1]$ and $x_1^+=0$. 
Assume alternatively that $2x_2+x_3\ge 1$.  The discriminant in the expressions for $x_2^+$ and $x_2^-$ can be written as $3x_2^2 + x_2(x_3-4) + (1-x_3)^2$, which we regard as a quadratic in $x_2$.  This quadratic has roots
 \begin{eqnarray*}
 x_2^+ &=& \frac{1}{6}\left((4-x_3) + \sqrt{4+16x_3-11x_3^2}\right),\\
  x_2^- &=& \frac{1}{6}\left((4-x_3) - \sqrt{4+16x_3-11x_3^2}\right),
  \end{eqnarray*}
and it is easy to verify that $4+16x_3-11x_3^2\ge 0$ and also that
$x_2^-\ge 0$ for any $x_3\in[0,1]$. To have $x_2\ge 0$ we then require either $0\le x_2\le x_2^-$ or $x_2^+\le x_2\le 1$. 

Recall that we are assuming that $2x_2+x_3\ge 1$.  Then $x_2\le x_2^-$ implies that $2x_2^-\ge 1-x_3$, which
is equivalent to $15x_3^2-12x_3-3\ge0$.  For $x_3\in[0,1]$ this condition is satisfied only for $x_3=1$, resulting in
$x_2=x_2^-=0$ and $x_1=0$.  Other than this solution, we can then assume that $x_2^+\le x_2\le 1$.  However it is straightforward to show that
$x_2^+\le 1$ is equivalent to $x_3-x_3^2\le 0$, which holds for only $x_3=0$ or $x_3=1$. For $x_3=0$, $x_2=1$ we have $x_1^+=x_1^-=\frac{1}{2}$, and for $x_3=1$, $x_2=1$ we have $x_1^+=1$, $x_1^-=-1$. We have thus shown that the only 
solutions of $4x_1 + 4X_{11} - 8X_{12} - 4X_{13} +X_{22} +3X_{23}=0$ with all variables in $[0,1]$ and
$X_{ij} = x_ix_j$ for all $(i,j)$ are the following:
 \begin{itemize}
 \item points of the form $\set{x_1=x_2=X_{11}=X_{22}=X_{12}=X_{13} = X_{23}=0, x_3\in[0,1], X_{33} = x_3^2}$,
 a set of dimension 2 including the origin;
 \item the point having $x_1=\frac{1}{2}$,  $X_{11} = \frac{1}{4}$, $x_2=X_{22}=1$, $X_{12}=\frac{1}{2}$ and all other variables equal to 0;
 \item the point with all variables equal to 1.
\end{itemize}

\end{proof}

In Table \ref{tab:violations_II} we consider the maximum possible violations of the ETRI2 and ETRI3 constraints when different sets of constraints are imposed. Similar to the presentation in Table \ref{tab:violations_I}, we give both the maximum violations for the constraints as given in \eqref{eq:ETRI2} and \eqref{eq:ETRI3}, and the maximum violations when considering these constraints and their switchings normalized to have coefficient vectors of norm one.  The minimum norm for the vector of coefficients in a switching of the ETRI2 constraints is $\sqrt{50}$, as opposed to $\sqrt{65}$ for the constraints in \eqref{eq:ETRI2} while the 
minimum norm for the vector of coefficients in a switching of the ETRI3 constraints is $\sqrt{115}$, as opposed to $\sqrt{122}$ for the constraints in \eqref{eq:ETRI3}; see Tables \ref{tab:etri2} and \ref{tab:etri3} in the Appendix for details.  

\begin{table}
\begin{center}
\caption{Maximum constraint violations II}
\vspace{.2cm}
\label{tab:violations_II}
\begin{tabular}{r|rr|rr}
Enforced& \multicolumn{2}{c|}{Max violation } & \multicolumn{2}{c}{Max normalized} \\ 
Constraints& ETRI2 & ETRI3 &ETRI2 & ETRI3 \\ \hline
PSD+DIAG  & 0.3333 & 0.3333 &  0.0471 &0.0311  \\
PSD+RLT &0.1111& 0.2038 &0.0157 &0.0190 \\
PSD+RLT+TRI  &0.1005 & 0.1005  & 0.0142 &  0.0094 \\
PSD+RLT+TRI+ETRI1& 0.0856 & 0.0856 & 0.0121 & 0.0080
\end{tabular}
\end{center}
\end{table}

\section{Conic strengthening}\label{sec:SOC}

In this section we describe a conic strengthening of the ETRI constraints derived in the previous two sections.
The conic strengthening is motivated by the argument used to demonstrate validity of the ETRI1 constraints in section \ref{sec:ETRI1}, which is based on the valid constraint 
$x_1x_2 + x_3 -1 \le x_1x_2x_3$ combined with the fact that $2x_1x_2x_3\le x_1^2 + (x_2x_3)^2$.  To strengthen the resulting constraint we introduce one additional variable $z$, which will take the place of the trilinear
term $x_1x_2x_3$.  In the global optimization literature, valid constraints on the variable $z$ are usually projected down to
the set of variables $(x_1,x_2,x_3,z)$. Retaining the variables $X_{12}, X_{13}, X_{23}$, it  can be shown 
\cite{Speakman.Lee.2017} that the convex
hull of $\set{(x_1, x_2, x_3, x_1x_2, x_1x_3, x_2x_3, x_1x_2x_3)\suchthat x\in\Box_3}$ is given by 
$(x_1, x_2, x_3, X_{12}, X_{13}, X_{23}, z)$ that satisfy the following system of linear constraints:
\begin{eqnarray}
z\ \ge\ 0,\ z & \le& X_{12},\ z\ \le\ X_{13},\ z\ \le\ X_{23}\nonumber\\
X_{12} +X_{13} &\le& x_1 + z, \ X_{12} +X_{23}\ \le\ x_2 + z, \ X_{13} +X_{23}\ \le\ x_3 + z, \label{eq:trilinear}\\
x_1+x_2+x_3+z &\le& X_{12} + X_{13} +X_{23} +1\nonumber
\end{eqnarray}
The constraints in \eqref{eq:trilinear} can be viewed as extensions of the ordinary RLT and TRI constraints on
$(x_1, x_2, x_3, X_{12}, X_{13}, X_{23})$. 
Note that the constraints in \eqref{eq:trilinear} do not involve the diagonal variables $X_{11}, X_{22}, X_{33}$. However, since $z$ is a proxy for $x_1x_2x_3$, where $x\in\Box_3$, and $(x_2x_3)^2\le x_2x_3$, the following constraints are also valid:
\begin{equation}\label{eq:SOC}
z^2 \le X_{11}X_{23},\ \ z^2 \le X_{22}X_{13},\ \ z^2 \le X_{33}X_{12}.
\end{equation}
The constraints in \eqref{eq:SOC} are rotated second-order-cone (SOC) constraints that can be imposed in addition to the
constraints from \eqref{eq:trilinear}.  These SOC constraints can also be imposed on switchings of variables, where switchings of variables are applied to $z$ in the obvious way; for example, switching $x_1$ results in a switched value for $z$ of $(1-x_1)x_2x_3 = x_2x_3 - x_1x_2x_3 = X_{23} - z$.

\begin{lemma}\label{lem:SOC_implies_ETRI}
The constraints \eqref{eq:trilinear} together with the constraints \eqref{eq:SOC} and their switchings imply all of the
ETRI1 and ETRI2 constraints.
\end{lemma}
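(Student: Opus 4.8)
The plan is to reduce the lemma to two base inequalities---the first constraint of \eqref{eq:ETRI1} and the first constraint of \eqref{eq:ETRI2}---and to obtain each from the same short chain of inequalities, after which every remaining ETRI1 and ETRI2 constraint follows by symmetry. The three ingredients I would assemble are: (i) the trilinear constraint $X_{12}+X_{13}-x_1\le z$, which is one of the inequalities in \eqref{eq:trilinear}; (ii) the bound $z\le\sqrt{X_{11}X_{23}}$, which follows from the rotated SOC constraint $z^2\le X_{11}X_{23}$ in \eqref{eq:SOC} once the nonnegativity of $X_{11}$ and $X_{23}$ built into the rotated cone is used to take the square root; and (iii) elementary AM--GM bounds on $\sqrt{X_{11}X_{23}}$.

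For the ETRI1 base constraint I would chain these as
\[
2(X_{12}+X_{13}-x_1)\ \le\ 2z\ \le\ 2\sqrt{X_{11}X_{23}}\ \le\ X_{11}+X_{23},
\]
where the last step is $X_{11}+X_{23}-2\sqrt{X_{11}X_{23}}=(\sqrt{X_{11}}-\sqrt{X_{23}})^2\ge0$; rearranging gives $2x_1+X_{11}-2X_{12}-2X_{13}+X_{23}\ge0$. For the ETRI2 base constraint the first two steps are identical and only the weighting in the final AM--GM step changes,
\[
4(X_{12}+X_{13}-x_1)\ \le\ 4z\ \le\ 4\sqrt{X_{11}X_{23}}\ \le\ 4X_{11}+X_{23},
\]
now using $4X_{11}+X_{23}-4\sqrt{X_{11}X_{23}}=(2\sqrt{X_{11}}-\sqrt{X_{23}})^2\ge0$, which rearranges to $4x_1+4X_{11}-4X_{12}-4X_{13}+X_{23}\ge0$. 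This mirrors exactly the two direct validity arguments given just after \eqref{eq:ETRI1} and \eqref{eq:ETRI2}, with the trilinear variable $z$ playing the role of $x_1x_2x_3$ and the SOC constraint supplying the inequality $(x_2x_3)^2\le x_2x_3$.

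It then remains to produce the permuted and switched copies. The plan here is to argue that the entire constraint system---\eqref{eq:trilinear}, the SOC constraints \eqref{eq:SOC}, and all their switchings---is invariant, as a set, under the group generated by permutations of the indices $\{1,2,3\}$ and switchings of the variables. A permutation permutes the three trilinear inequalities of each type among themselves and permutes the three SOC constraints among themselves while fixing $z$; the facet system \eqref{eq:trilinear} of the trilinear hull is itself permutation- and switching-invariant because each switching carries the trilinear surface to itself, and the SOC switchings are included by hypothesis. Since the $24$ ETRI1 constraints and the $24$ ETRI2 constraints are precisely the orbits of the two base constraints under this group, applying the corresponding group element to the two chains above yields a valid derivation of every ETRI1 and ETRI2 constraint.

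The step I expect to require the most care is the symmetry reduction rather than the algebra, because switching acts nontrivially on $z$ (for instance switching $x_1$ replaces $z$ by $X_{23}-z$) and on the diagonal entries (replacing $X_{11}$ by $1-2x_1+X_{11}$). I would therefore verify explicitly that a switching carries the specific trilinear inequality and the specific SOC constraint appearing in a chain to precisely the inequalities needed for the image constraint, rather than invoking invariance only in the abstract. I would also confirm that the nonnegativity of $X_{11}$ and $X_{23}$ needed to take square roots is genuinely supplied by the rotated-cone description of \eqref{eq:SOC} and its switchings, so that each chain is self-contained within the stated hypotheses.
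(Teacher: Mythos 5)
Your proposal is correct and follows essentially the same route as the paper's proof: the chain $X_{12}+X_{13}-x_1\le z$ combined with the AM--GM bounds $2z\le X_{11}+X_{23}$ and $4z\le 4X_{11}+X_{23}$ derived from $z^2\le X_{11}X_{23}$, with the remaining constraints obtained by symmetry. The paper simply states ``it suffices to show'' the two base cases and leaves the permutation/switching reduction implicit, whereas you spell out the invariance argument more carefully; otherwise the two arguments coincide.
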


\begin{proof}
It suffices to show that the SOC constraint $z^2 \le X_{11}X_{23}$ together with the constraints in \eqref{eq:trilinear}
imply the ETRI1 constraint $2x_1 + X_{11} - 2X_{12} - 2X_{13} + X_{23}\ge 0$
and the ETRI2 constraint $4x_1 + 4X_{11} - 4X_{12} - 4X_{13} + X_{23} \ge 0$.
From \eqref{eq:trilinear} we have $X_{12} +X_{13} -x_1 \le z$, and $z^2 \le X_{11}X_{23}$ implies that
$z\le (X_{11} + X_{23})/2$ from the arithmetic-geometric mean inequality.  Combining these two facts we obtain
$2X_{12} +2X_{13} -2x_1 \le X_{11} + X_{23}$, which is exactly the required ETRI1 constraint.  The same SOC 
constraint, written as $4z^2\le 4X_{11}X_{23}$, also implies that $2z \le (4X_{11} + X_{23})/2$, or
$4z \le 4X_{11} + X_{23}$.  Combining that inequality with $X_{12} +X_{13} -x_1 \le z$ results in
$4X_{12} +4X_{13} -4x_1 \le 4X_{11} + X_{23}$, which is exactly the required ETRI2 constraint.
\end{proof}

Although the constraints from \eqref{eq:trilinear} and \eqref{eq:SOC} with their switchings imply the ETR1 and ETRI2 constraints, it turns out that these strengthened constraints have no effect on the maximum violation for the ETRI3 constraints, which remains 0.0856 as reported in Table \ref{tab:violations_II}. We next show that it is also possible to give a conic strengthening of the 
ETRI3 constraints.  This strengthening is based on the fact that the rank-one matrix
\[
\begin{pmatrix} 1 & x_1 & x_2 + x_2x_3 \\
                      x_1 &   x_1^2   & x_1x_2 + x_1x_2x_3 \\
                      x_2 + x_2x_3  & x_1x_2 + x_1x_2x_3 & x_2^2 + 2x_2^2x_3 + x_2^2x_3^2
\end{pmatrix}
\]
 is PSD, which together with $x_2^2x_3^2\le x_2^2x_3\le x_2x_3$  implies the valid rotated SOC constraint
 \begin{equation}\label{eq:ETRI3_SOC}
 (X_{12}+z)^2 \le X_{11}(X_{22} + 3X_{23}).
 \end{equation}
 
 \begin{lemma}
 The constraints \eqref{eq:trilinear} together with the SOC constraints obtained from \eqref{eq:ETRI3_SOC} by permuting indices and switching variables imply all of the ETRI3 constraints.
 \end{lemma}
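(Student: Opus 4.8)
The plan is to follow the template of the proof of Lemma~\ref{lem:SOC_implies_ETRI}: reduce the rotated SOC constraint \eqref{eq:ETRI3_SOC} to a linear upper bound via the arithmetic--geometric mean inequality, and then chain it with a single linear constraint drawn from \eqref{eq:trilinear}. Because both the ETRI3 family and the system \eqref{eq:trilinear} carry full permutation and switching symmetry, it suffices to derive one representative constraint, namely the first inequality in \eqref{eq:ETRI3}, from \eqref{eq:ETRI3_SOC} together with \eqref{eq:trilinear}; the remaining ETRI3 constraints then follow by applying the corresponding permutation of indices and switching of variables simultaneously to \eqref{eq:ETRI3_SOC} and to \eqref{eq:trilinear}.

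For the representative inequality, first I would take the ``extended TRI'' constraint $X_{12}+X_{13}\le x_1+z$ from \eqref{eq:trilinear}, multiply by $4$, and add $4X_{12}$ to both sides to obtain $8X_{12}+4X_{13}-4x_1 \le 4(X_{12}+z)$. Next I would linearize \eqref{eq:ETRI3_SOC}: writing it as $[2(X_{12}+z)]^2 \le (4X_{11})(X_{22}+3X_{23})$ and applying the arithmetic--geometric mean inequality to the factors $4X_{11}$ and $X_{22}+3X_{23}$ (whose nonnegativity is built into the rotated-cone constraint, while $X_{12}+z\ge 0$ follows from $z\ge 0$ and $z\le X_{12}$ in \eqref{eq:trilinear}) gives $4(X_{12}+z)\le 4X_{11}+X_{22}+3X_{23}$. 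Chaining the two displayed bounds yields $8X_{12}+4X_{13}-4x_1\le 4X_{11}+X_{22}+3X_{23}$, which is exactly the first constraint of \eqref{eq:ETRI3}.

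To pass to the full family, I would observe that the six base constraints in \eqref{eq:ETRI3} are related to one another by permutations of $\set{1,2,3}$, and that both \eqref{eq:trilinear} (which is permutation symmetric) and the SOC constraint \eqref{eq:ETRI3_SOC} transform into exactly the permuted instances needed: for example the transposition $2\leftrightarrow 3$ leaves $z$, $x_1$, $X_{11}$, $X_{23}$ and the inequality $X_{12}+X_{13}\le x_1+z$ fixed while sending \eqref{eq:ETRI3_SOC} to $(X_{13}+z)^2\le X_{11}(X_{33}+3X_{23})$, and the same chain then produces the second constraint of \eqref{eq:ETRI3}. The switchings are handled identically: a switching acts on $z$ as described before \eqref{eq:SOC} (e.g.\ replacing $z$ by $X_{23}-z$ when $x_1$ is switched), maps \eqref{eq:trilinear} to the convex-hull description of the switched trilinear set, and maps \eqref{eq:ETRI3_SOC} to the corresponding switched SOC constraint, so the one-line argument transports to all $48$ constraints. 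I expect the only real obstacle to be organizational rather than mathematical: verifying that for each ETRI3 constraint the required ``extended TRI'' inequality and the matching SOC constraint are genuinely the images of $X_{12}+X_{13}\le x_1+z$ and \eqref{eq:ETRI3_SOC} under one common permutation-and-switching, so that the single AM--GM chain applies cleanly; the underlying inequality manipulation is no harder than in Lemma~\ref{lem:SOC_implies_ETRI}.
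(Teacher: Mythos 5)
Your proposal is correct and follows essentially the same route as the paper's proof: the same chain of adding $4X_{12}$ to the scaled extended TRI inequality $X_{12}+X_{13}\le x_1+z$ and then bounding $4(X_{12}+z)$ by $4X_{11}+X_{22}+3X_{23}$ via the arithmetic--geometric mean applied to \eqref{eq:ETRI3_SOC}, with the remaining constraints obtained by symmetry. Your explicit remarks on the nonnegativity of $X_{12}+z$ and on how permutations and switchings act jointly on \eqref{eq:trilinear} and \eqref{eq:ETRI3_SOC} are details the paper leaves implicit, but they do not change the argument.
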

 
 \begin{proof} It suffices to show that the constraints \eqref{eq:trilinear} together with  \eqref{eq:ETRI3_SOC} imply the
 ETRI3 constraint  $ 4x_1 + 4X_{11} - 8X_{12} - 4X_{13} + X_{22} + 3X_{23} \ge 0$.   From \eqref{eq:trilinear}
   we have  $4X_{12} +4X_{13} -4x_1 \le 4z$, and adding $4X_{12}$ to both sides obtains  the inequality
        $8X_{12} +4X_{13} -4x_1 \le 4X_{12}+4z$. From \eqref{eq:ETRI3_SOC} we have $4(X_{12}+z)^2 \le
        4X_{11}(X_{22} + 3X_{23})$, which implies that $2(X_{12}+z)\le (4X_{11} + X_{22} + 3X_{23})/2$ from the arithmetic-geometric mean inequality.  Combining these facts, we obtain  $8X_{12} +4X_{13} -4x_1 \le  4X_{11} + X_{22}+ 3X_{23}$, which is exactly the required ETRI3 inequality. \end{proof}        
        
 \section{Computational results}\label{sec:comp}
        
 In this section we report a variety of different computational results obtained when implementing the constraints described in Sections
 \ref{sec:ETRI1}, \ref{sec:ETRI2/3} and \ref{sec:SOC}.  We will begin with results for problems over $\QPB_3$, and then consider instances
 over $\QPB_n$ for $n>3$. All problems in this section were solved using the Mosek or SeDuMi interior-point solvers running under Matlab or Julia.
 
The following example from Burer and Letchford \cite{Burer.Letchford.2009} shows that the PSD, RLT and TRI conditions together are not sufficient to characterize $\QPB_3$.
\[
{\rm BL:} \max\left\{x\tran Q x + q\tran x \suchthat x\in \Box_3\right\},
\]
where
\[
Q = \begin{pmatrix} -2.25&-3& -3 \\ -3& 0 &-0.5 \\ -3 &-0.5& 1 \end{pmatrix},\quad
q = \begin{pmatrix} 3 \\ 1 \\ 0 \end{pmatrix}.
\]
In particular, the exact solution value for the BL problem is 1.0, but the value using the relaxation of $\QPB_3$ that imposes the PSD, RLT and TRI constraints is approximately 1.09291. 

In Table \ref{tab:BL} we give the values obtained by solving the BL problem over increasingly tight relaxations of $\QPB_3$. The first row in the table corresponds to the PSD+RLT+TRI relaxation, and subsequent rows are labeled by the constraints in addition to PSD+RLT+TRI.
For example, adding the  ETRI1/2/3 constraints to the PSD+RLT+TRI relaxation reduces the gap to the true solution value from 0.09291 to 0.05882. In the last case, ``SOC" refers to the constraints obtained from \eqref{eq:SOC} by switching variables as well as from \eqref{eq:ETRI3_SOC} by permuting indices and switching variable, and in this case the original RLT and TRI
 constraints are replaced by the extended system \eqref{eq:trilinear}. The resulting SOC
 strengthening of the ETRI constraints obtains the exact solution value 1.0, and in fact this value is attained by adding only one constraint which
 is a switching of one of the constraints \eqref{eq:SOC}. To our knowledge, this is the first time that the BL problem has been solved exactly without the use of spatial branching, dynamically generated cutting planes \cite{Dong.Anstreicher.2013}, or an extended-variable formulation such as the exact disjunctive formulation or the formulation from a hierarchy of cones $\Kcal_n^r$ that better approximate the copositive or completely positive cone for $r>0$ \cite{Pena.Vera.Zuluaga.2007, Dong.2013}.  
 
 Although adding the SOC constraints to the PSD+RLT+TRI relaxation obtains the true optimal value for the BL problem, the solution does not
 immediately provide a feasible $x$ with $x\tran Qx+q\tran x=1$.  The reason for this is that the BL problem has multiple optimal solutions, and in this
 case an interior-point solver such as Mosek or SeDuMi will not converge to a rank-one solution.  This deficiency can be overcome by an additional step that
 imposes a constraint $Q\Dot X + q\tran x = 1$ and re-solves the problem with a random objective, which then generates a rank-one solution having
 $x\tran Qx+q\tran x=1$.

\begin{table}
\begin{center}
\caption{Objective values for Burer-Letchford problem}
\vspace{.2cm}
\label{tab:BL}
\begin{tabular}{r|r}
Enforced&  Objective \\
Constraints&   value\\  \hline
PSD+RLT+TRI  & 1.09291\\
+ETRI1&1.06613\\
+ETRI1/2/3&1.05882\\
+SOC&1.00000
\end{tabular}
\end{center}
\end{table}  
       
For the BL example, the vector of objective coefficients for the variables $(x_1, x_2, x_3,X_{11},$ $X_{22}, X_{33},X_{12}, X_{13}, X_{23})$
is $(3,1,0,-2.25,0,1,-6,-6,-1)$, and the norm of this coefficient vector is approximately 9.4373.  If the coefficient vector is normalized to have
norm 1, then the gap of 0.09291 for the original problem corresponds to a gap of  0.009845 for the normalized objective. 
We next consider the largest possible gap for a normalized objective when minimized over a system of constraints compared to the exact minimum
over  $\QPB_3$, where the latter can be computed using the disjunctive representation.  Maximizing this difference is a non-convex problem, but it can be approximated by repeatedly generating random coefficients.  A coefficient vector that tentatively maximizes the difference can also be potentially improved by making smaller perturbations to it. We performed extensive computations in an effort to find normalized coefficients that approximately maximize the gap for different relaxations of $\QPB_3$. The results of these computations are reported in Table \ref{tab:max_gaps}.  For the PSD+DIAG constraint set the maximum gap appears to be achieved for an objective corresponding to the normalized sum of 2 RLT constraints, for example $X_{12} + X_{13}$, and for the PSD+RLT constraint set the maximum appears to be achieved for objective coefficients corresponding to a normalized triangle inequality TRI. For the PSD+RLT+TRI relaxation, the maximum gap appears to be achieved by normalizing an ETRI1 inequality with coefficient vector of norm $\sqrt{11}$, as in Table \ref{tab:violations_I}. However, when the ETRI1 constraints are added to the PSD+RLT+TRI relaxation the maximum normalized gap does not
correspond to an ETRI2 or ETRI3 constraint, as can be seen by comparing the maximum value of 0.0135 in Table \ref{tab:max_gaps} to the max normalized values in the last row of Table \ref{tab:violations_II}.
As in Table \ref{tab:BL}, the rows below PSD+RLT+TRI are labeled using the constraints that are added to the PSD+RLT+TRI relaxation.  As reported in the table, these computations indicate that the use of the SOC tightenings of the ETRI constraints reduce the maximum gap
for a normalized objective by better than a factor of 2 compared to the PSD+RLT+TRI relaxation. 
 
\begin{table}
\begin{center}
\caption{Maximum gaps over relaxations}
\vspace{.2cm}
\label{tab:max_gaps}
\begin{tabular}{r|rrrr}
Enforced&  Max gap& \\
Constraints&   (normalized) & Remark\\ \hline
PSD+DIAG   & 0.1768 &Sum of 2 RLT \\
PSD+RLT &  0.0625&TRI \\
PSD+RLT+TRI  & 0.0188&  ETRI1\\ \hline
+ETRI1& 0.0135\\
+ETRI1/2/3&0.0111\\
+SOC&0.0086
\end{tabular}
\end{center}
\end{table}  
  
We next consider results on problems with $n>3$.  Like the original TRI constraints, for $n>3$  the ETRI constraints immediately extend to any triple of indices
$1\le i<j<k\le n$.  The same is true for the SOC tightenings described in Section \ref{sec:SOC}, with the added consideration that any such triple
requires an additional variable $z_{ijk}$ in place of the variable $z$ used in Section \ref{sec:SOC}.  When solving a problem with $n>3$, we add RLT,
TRI and ETRI constraints in several ``rounds," limiting the number of violated constraints on each round before re-solving the problem.  This strategy prevents the addition of an excessive and ultimately unnecessary number of constraints.  We use a similar strategy when incorporating SOC constraints,
and in that case add additional variables $z_{ijk}$ and the corresponding linear constraints \eqref{eq:trilinear} only as needed to enforce violated SOC
conditions.

A test set of 54 BoxQP instances first used in \cite{Vandenbussche.Nemhauser.2005} has been used in several subsequent papers \cite{Anstreicher.2012,
Bonami.Gunluk.Linderoth.2018,Burer.Vandenbussche.2009} to compare the performance
of different methods. These problems have dimensions from 20 to 60 and were generated with varying degrees of sparsity for $Q$ and $q$. It was shown in \cite{Anstreicher.2012} that the PSD+RLT+TRI relaxation is tight for all but one of these 54 problems.  We first considered adding the ETRI and SOC constraints on that one problem (instance 50-050-1), but obtained no improvement in the bound. We next generated additional instances with similar dimensions using the same methodology employed in \cite{Vandenbussche.Nemhauser.2005}. As expected, the PSD+RLT+TRI relaxation was tight on a high
proportion of these problems and obtained a rank-one solution with objective value equal to the relaxation bound.  For the occasional instance where this did not occur, we obtained no improvements using the ETRI constraints or their SOC tightenings.  This was disappointing but not completely unexpected given
the rarity of instances with any gap and the fact that these problems have a substantial space of off-diagonal variables.  As shown in \cite{Burer.Letchford.2009}, any constraint that is valid for the Boolean Quadric Polytope (BQP)  is also valid for the variables $x$ and $\set{X_{ij} \suchthat j>i}$.  Such valid constraints include the RLT and TRI constraints, but for $n>3$ there are large families of additional valid constraints for the BQP
\cite{Boros.Hammer.1993,Padberg.1989}.

We next considered generating problems similar to those from \cite{Vandenbussche.Nemhauser.2005}, but with smaller $n$.  For $5\le n\le 10$, and
problems of the form $\max\set{x\tran Qx + q\tran x\suchthat x\in\Box_n}$, we generated $Q$ and $q$ as follows. For a given density parameter $d$  between 0 and 100,
each $q_i$ and $Q_{ij}$, $i<j$ is an integer randomly distributed on [-50,50] with probability $d$\% and is otherwise zero, and $Q_{ji}=Q_{ij}$.  As was the case for larger problems, the PSD+RLT+TRI relaxation was tight for a high proportion of instances generated in this way.  However, for instances where
the PSD+RLT+TRI relaxation was not tight, the ETRI constraints and their SOC strengthenings almost always  improved the bound.  In Table 
\ref{tab:n>3_results} we give results for 12 such problem instances\footnote{Data for the problems in Table
  \ref{tab:n>3_results} is available from the authors.}.  The instances are labeled using the same scheme ($n$-$d$-\#) as in 
  \cite{Vandenbussche.Nemhauser.2005}. In the table, P+R+T corresponds to the PSD+RLT+TRI relaxation, and the other columns correspond to additional constraints. For each relaxation we give both the relaxation value, an upper bound on the optimal value, and the feasible objective value
  $x\tran Q x + q\tran x$ from the solution $(x,X)$, a lower bound on the optimal value. In Table \ref{tab:n>3_gaps} we give the gaps between
 the relaxation value and optimal value, and relaxation value and feasible value, for the same problems.
 
 \begin{table}
\begin{center}
\caption{Objective values on problems with $n>3$}
\vspace{.2cm}
\scriptsize
\label{tab:n>3_results}
\begin{tabular}{r|r|rrrr|rrrr}  
 & & \multicolumn{4}{c}{Relaxation objective value}&\multicolumn{4}{c}{Feasible objective value}\\
 Instance	&	OPT	&	P+R+T	&	+ETRI1	&	+ETRI1/2/3	&	+SOC	&	P+R+T	&	+ETRI1	&	+ETRI1/2/3	&	+SOC	\\ \hline
05-050-1	&	0.0061	&	0.2122	&	0.0090	&	0.0090	&	0.0090	&	-7.7721	&	-1.1640	&	-1.1353	&	-1.1598	\\
06-070-1	&	1.0000	&	1.0607	&	1.0050	&	1.0050	&	1.0000	&	-6.3672	&	-5.7869	&	-5.7870	&	1.0000	\\
06-080-1	&	0.0000	&	0.0356	&	0.0000	&		&		&	-3.2871	&	0.0000	&		&		\\
07-060-1	&	5.0000	&	5.1521	&	5.0784	&	5.0784	&	5.0000	&	-7.9104	&	-6.6276	&	-6.6276	&	5.0000	\\
08-075-1	&	130.3906	&	130.6563	&	130.4851	&	130.4794	&	130.3906	&	94.2010	&	97.7381	&	96.9528	&	130.3906	\\
08-075-2	&	49.0000	&	49.1094	&	49.0354	&	49.0000	&		&	32.9629	&	38.0247	&	49.0000	&		\\
08-080-1	&	39.7189	&	40.0934	&	39.7189	&		&		&	21.1042	&	39.7189	&		&		\\
08-085-1	&	80.0000	&	80.2355	&	80.0000	&		&		&	27.2094	&	80.0000	&		&		\\
09-070-1	&	8.0788	&	8.6110	&	8.3477	&	8.0788	&		&	-9.6524	&	-10.6659	&	8.0788	&		\\
09-080-1	&	163.0347	&	163.3233	&	163.2550	&	163.2550	&	163.0347	&	144.3430	&	145.0280	&	145.0280	&	163.0347	\\
10-070-1	&	289.0000	&	289.3192	&	289.0000	&		&		&	247.8940	&	289.0000	&		&		\\
10-075-1	&	159.2132	&	160.0067	&	159.9102	&	159.8997	&	159.2132	&	118.2380	&	118.9980	&	119.8370	&	159.2132	
  \end{tabular}
\end{center}
\end{table}

\begin{table}[h]
\begin{center}
\caption{Objective gaps on problems with $n>3$}
\vspace{.2cm}
\scriptsize
\label{tab:n>3_gaps}
\begin{tabular}{r|r|rrrr|rrrr}  
 & & \multicolumn{4}{c}{Relaxation value to optimal value}&\multicolumn{4}{c}{Relaxation value to feasible value}\\
Instance	&	OPT	&	P+R+T	&	+ETRI1	&	+ETRI1/2/3	&	+SOC	&	P+R+T	&	+ETRI1	&	+ETRI1/2/3	&	+SOC	\\ \hline
05-050-1	&	0.0061	&	0.2061	&	0.0029	&	0.0029	&	0.0029	&	7.9843	&	1.1730	&	1.1442	&	1.1688	\\
06-070-1	&	1.0000	&	0.0607	&	0.0050	&	0.0050	&	0.0000	&	7.4278	&	6.7919	&	6.7920	&	0.0000	\\
06-080-1	&	0.0000	&	0.0356	&	0.0000	&		&		&	3.3227	&	0.0000	&		&		\\
07-060-1	&	5.0000	&	0.1521	&	0.0784	&	0.0784	&	0.0000	&	13.0625	&	11.7060	&	11.7060	&	0.0000	\\
08-075-1	&	130.3906	&	0.2656	&	0.0945	&	0.0887	&	0.0000	&	36.4553	&	32.7470	&	33.5266	&	0.0000	\\
08-075-2	&	49.0000	&	0.1094	&	0.0354	&	0.0000	&		&	16.1465	&	11.0107	&	0.0000	&		\\
08-080-1	&	39.7189	&	0.3745	&	0.0000	&		&		&	18.9892	&	0.0000	&		&		\\
08-085-1	&	80.0000	&	0.2355	&	0.0000	&		&		&	53.0261	&	0.0000	&		&		\\
09-070-1	&	8.0788	&	0.5322	&	0.2689	&	0.0000	&		&	18.2633	&	19.0136	&	0.0000	&		\\
09-080-1	&	163.0347	&	0.2886	&	0.2202	&	0.2202	&	0.0000	&	18.9803	&	18.2270	&	18.2270	&	0.0000	\\
10-070-1	&	289.0000	&	0.3192	&	0.0000	&		&		&	41.4252	&	0.0000	&		&		\\
10-075-1	&	159.2132	&	0.7935	&	0.6970	&	0.6865	&	0.0000	&	41.7687	&	40.9122	&	40.0627	&	0.0000	
\end{tabular}
\end{center}
\end{table}

 For these 12 problems, the ETRI1 constraints obtain the optimal value on 4 instances, the ETRI1/2/3 constraints obtain the optimal value
 on 2 instances, and the SOC strengthenings of the ETRI1/2/3 constraints obtain the optimal value on 5 instances. For the remaining instance (05-50-01), adding the ETRI1 constraints obtains a substantial reduction in the relaxation bound, but the ETRI2/3 constraints and SOC strengthenings give no further improvement. The true optimal value for this instance was computed using Gurobi. For the 11 problems where the gap was closed to zero, in 10 instances the relaxed problem had a rank-one solution that provided a feasible $x$ with $x\tran Q x+q\tran x$ equal to the bound value. In the remaining case (07-060-1) the solution is not rank-one due to the presence of multiple optimal solutions, and an optimal $x$ can be generated using the same technique described for the BL problem earlier in the section. It is worthwhile to note that although the gaps between the  PSD+RLT+TRI relaxation value and the optimal value for the problems in Table \ref{tab:n>3_results} are already quite small, the gaps to the feasible solution from the PSD+RLT+TRI relaxation are much larger. In addition, the feasible values may worsen as the relaxation is tightened, as can be seen in several cases. Other methods such as local search could be used in an attempt to improve these feasible values, but we did not investigate this possibility.
Finally, although the potential number of ETRI and SOC constraints is quite large ($11,520$ ETRI constraints for $n=10$), the number of ETRI or SOC constraints used in the solutions of the problems in Table \ref{tab:n>3_results} was very small. For problems solved to optimality using ETRI constraints, the number or ETRI constraints used was never more than 15, and for problems solved to optimality using SOC constraints, the number of SOC constraints used was at most 6.

\section*{Acknowlegements}

Work on this paper was begun when one of the authors (Anstreicher) was visiting the Dept. of Mathematics at the University of Klagenfurt as a 
Karl-Popper-Fellow. Support
from the University of Klagenfurt is gratefully acknowleged. This research was funded in whole or in part by the Austrian Science Fund (FWF) [10.55776/DOC78]. For open access purposes, the author has applied a CC BY public copyright license to any author-accepted manuscript version arising from this submission.
        
\bibliographystyle{abbrv}
\bibliography{QPB}       

 \section*{Appendix}
    
 In the tables below we give coefficients for the ETRI1 constraints \eqref{eq:ETRI1} and their switchings, the ETRI2 constraints   \eqref{eq:ETRI2} and their
 switchings and the  ETRI3 constraints   \eqref{eq:ETRI3} and their switchings.  In each row the coefficients $c=(c_1, c_2,c_3)$ and
 $C=(C_{11}, C_{22}, C_{33}, C_{12}, C_{13}, C_{23})$ and constant $b$ correspond to a constraint given in the form
 \[
 \sum_{i=1}^3 c_ix_i + \sum_{i=1}^3\sum_{j\ge i} C_{ij}X_{ij} + b \ge 0.
 \]
Each group of 8 constraints corresponds to one of the constraints from \eqref{eq:ETRI1}, \eqref{eq:ETRI2} or \eqref{eq:ETRI3}
followed by switchings of that constraint.

\begin{table}[h]
\begin{center}
\caption{Coefficients for ETRI1 constraints}
\vspace{.2cm}
\label{tab:etri1}
\scriptsize
\begin{tabular}{rrrrrrrrrr}  
$x_1$	&	$x_2$	&	$x_3$	&	$X_{11}$	&	$X_{22}$	&	$X_{33}$	&	$X_{12}$	&	$X_{13}$	&	$X_{23}$	& $b$	\\ \hline
2	&	0	&	0	&	1	&	0	&	0	&	-2	&	-2	&	1	&	0	\\
0	&	1	&	0	&	1	&	0	&	0	&	-2	&	2	&	-1	&	0	\\
0	&	0	&	1	&	1	&	0	&	0	&	2	&	-2	&	-1	&	0	\\
-2	&	-1	&	-1	&	1	&	0	&	0	&	2	&	2	&	1	&	1	\\
-4	&	-2	&	-2	&	1	&	0	&	0	&	2	&	2	&	1	&	3	\\
-2	&	-1	&	2	&	1	&	0	&	0	&	2	&	-2	&	-1	&	1	\\
-2	&	2	&	-1	&	1	&	0	&	0	&	-2	&	2	&	-1	&	1	\\
0	&	1	&	1	&	1	&	0	&	0	&	-2	&	-2	&	1	&	0	\\ \hline
0	&	2	&	0	&	0	&	1	&	0	&	-2	&	1	&	-2	&	0	\\
1	&	0	&	0	&	0	&	1	&	0	&	-2	&	-1	&	2	&	0	\\
-2	&	-4	&	-2	&	0	&	1	&	0	&	2	&	1	&	2	&	3	\\
-1	&	-2	&	2	&	0	&	1	&	0	&	2	&	-1	&	-2	&	1	\\
0	&	0	&	1	&	0	&	1	&	0	&	2	&	-1	&	-2	&	0	\\
-1	&	-2	&	-1	&	0	&	1	&	0	&	2	&	1	&	2	&	1	\\
2	&	-2	&	-1	&	0	&	1	&	0	&	-2	&	-1	&	2	&	1	\\
1	&	0	&	1	&	0	&	1	&	0	&	-2	&	1	&	-2	&	0	\\ \hline
0	&	0	&	2	&	0	&	0	&	1	&	1	&	-2	&	-2	&	0	\\
-2	&	-2	&	-4	&	0	&	0	&	1	&	1	&	2	&	2	&	3	\\
1	&	0	&	0	&	0	&	0	&	1	&	-1	&	-2	&	2	&	0	\\
-1	&	2	&	-2	&	0	&	0	&	1	&	-1	&	2	&	-2	&	1	\\
0	&	1	&	0	&	0	&	0	&	1	&	-1	&	2	&	-2	&	0	\\
2	&	-1	&	-2	&	0	&	0	&	1	&	-1	&	-2	&	2	&	1	\\
-1	&	-1	&	-2	&	0	&	0	&	1	&	1	&	2	&	2	&	1	\\
1	&	1	&	0	&	0	&	0	&	1	&	1	&	-2	&	-2	&	0	
\end{tabular}
\end{center}
\end{table}   
    
\begin{table}
\begin{center}
\caption{Coefficients for ETRI2 constraints}
\vspace{.2cm}
\label{tab:etri2}
\scriptsize
\begin{tabular}{rrrrrrrrrr}  
$x_1$	&	$x_2$	&	$x_3$	&	$X_{11}$	&	$X_{22}$	&	$X_{33}$	&	$X_{12}$	&	$X_{13}$	&	$X_{23}$	& $b$	\\ \hline
4	&	0	&	0	&	4	&	0	&	0	&	-4	&	-4	&	1	&	0	\\
0	&	1	&	0	&	4	&	0	&	0	&	-4	&	4	&	-1	&	0	\\
0	&	0	&	1	&	4	&	0	&	0	&	4	&	-4	&	-1	&	0	\\
-4	&	-1	&	-1	&	4	&	0	&	0	&	4	&	4	&	1	&	1	\\
-12	&	-4	&	-4	&	4	&	0	&	0	&	4	&	4	&	1	&	8	\\
-8	&	-3	&	4	&	4	&	0	&	0	&	4	&	-4	&	-1	&	4	\\
-8	&	4	&	-3	&	4	&	0	&	0	&	-4	&	4	&	-1	&	4	\\
-4	&	3	&	3	&	4	&	0	&	0	&	-4	&	-4	&	1	&	1	\\ \hline
0	&	4	&	0	&	0	&	4	&	0	&	-4	&	1	&	-4	&	0	\\
1	&	0	&	0	&	0	&	4	&	0	&	-4	&	-1	&	4	&	0	\\
-4	&	-12	&	-4	&	0	&	4	&	0	&	4	&	1	&	4	&	8	\\
-3	&	-8	&	4	&	0	&	4	&	0	&	4	&	-1	&	-4	&	4	\\
0	&	0	&	1	&	0	&	4	&	0	&	4	&	-1	&	-4	&	0	\\
-1	&	-4	&	-1	&	0	&	4	&	0	&	4	&	1	&	4	&	1	\\
4	&	-8	&	-3	&	0	&	4	&	0	&	-4	&	-1	&	4	&	4	\\
3	&	-4	&	3	&	0	&	4	&	0	&	-4	&	1	&	-4	&	1	\\ \hline
0	&	0	&	4	&	0	&	0	&	4	&	1	&	-4	&	-4	&	0	\\
-4	&	-4	&	-12	&	0	&	0	&	4	&	1	&	4	&	4	&	8	\\
1	&	0	&	0	&	0	&	0	&	4	&	-1	&	-4	&	4	&	0	\\
-3	&	4	&	-8	&	0	&	0	&	4	&	-1	&	4	&	-4	&	4	\\
0	&	1	&	0	&	0	&	0	&	4	&	-1	&	4	&	-4	&	0	\\
4	&	-3	&	-8	&	0	&	0	&	4	&	-1	&	-4	&	4	&	4	\\
-1	&	-1	&	-4	&	0	&	0	&	4	&	1	&	4	&	4	&	1	\\
3	&	3	&	-4	&	0	&	0	&	4	&	1	&	-4	&	-4	&	1
\end{tabular}
\end{center}
\end{table}         

\begin{table}
\begin{center}
\caption{Coefficients for ETRI3 constraints}
\vspace{.2cm}
\label{tab:etri3}
\scriptsize
\begin{tabular}{rrrrrrrrrr}  
$x_1$	&	$x_2$	&	$x_3$	&	$X_{11}$	&	$X_{22}$	&	$X_{33}$	&	$X_{12}$	&	$X_{13}$	&	$X_{23}$	& $b$	\\ \hline
4	&	0	&	0	&	4	&	1	&	0	&	-8	&	-4	&	3	&	0	\\
0	&	3	&	0	&	4	&	1	&	0	&	-8	&	4	&	-3	&	0	\\
-4	&	-2	&	3	&	4	&	1	&	0	&	8	&	-4	&	-3	&	1	\\
-8	&	-5	&	-3	&	4	&	1	&	0	&	8	&	4	&	3	&	4	\\
-12	&	-8	&	-4	&	4	&	1	&	0	&	8	&	4	&	3	&	8	\\
-8	&	-5	&	4	&	4	&	1	&	0	&	8	&	-4	&	-3	&	4	\\
-4	&	6	&	-1	&	4	&	1	&	0	&	-8	&	4	&	-3	&	1	\\
0	&	3	&	1	&	4	&	1	&	0	&	-8	&	-4	&	3	&	0	\\ \hline
4	&	0	&	0	&	4	&	0	&	1	&	-4	&	-8	&	3	&	0	\\
-4	&	3	&	-2	&	4	&	0	&	1	&	-4	&	8	&	-3	&	1	\\
0	&	0	&	3	&	4	&	0	&	1	&	4	&	-8	&	-3	&	0	\\
-8	&	-3	&	-5	&	4	&	0	&	1	&	4	&	8	&	3	&	4	\\
-12	&	-4	&	-8	&	4	&	0	&	1	&	4	&	8	&	3	&	8	\\
-4	&	-1	&	6	&	4	&	0	&	1	&	4	&	-8	&	-3	&	1	\\
-8	&	4	&	-5	&	4	&	0	&	1	&	-4	&	8	&	-3	&	4	\\
0	&	1	&	3	&	4	&	0	&	1	&	-4	&	-8	&	3	&	0	\\ \hline
0	&	4	&	0	&	1	&	4	&	0	&	-8	&	3	&	-4	&	0	\\
3	&	0	&	0	&	1	&	4	&	0	&	-8	&	-3	&	4	&	0	\\
-8	&	-12	&	-4	&	1	&	4	&	0	&	8	&	3	&	4	&	8	\\
-5	&	-8	&	4	&	1	&	4	&	0	&	8	&	-3	&	-4	&	4	\\
-2	&	-4	&	3	&	1	&	4	&	0	&	8	&	-3	&	-4	&	1	\\
-5	&	-8	&	-3	&	1	&	4	&	0	&	8	&	3	&	4	&	4	\\
6	&	-4	&	-1	&	1	&	4	&	0	&	-8	&	-3	&	4	&	1	\\
3	&	0	&	1	&	1	&	4	&	0	&	-8	&	3	&	-4	&	0	\\ \hline
0	&	4	&	0	&	0	&	4	&	1	&	-4	&	3	&	-8	&	0	\\
3	&	-4	&	-2	&	0	&	4	&	1	&	-4	&	-3	&	8	&	1	\\
-4	&	-12	&	-8	&	0	&	4	&	1	&	4	&	3	&	8	&	8	\\
-1	&	-4	&	6	&	0	&	4	&	1	&	4	&	-3	&	-8	&	1	\\
0	&	0	&	3	&	0	&	4	&	1	&	4	&	-3	&	-8	&	0	\\
-3	&	-8	&	-5	&	0	&	4	&	1	&	4	&	3	&	8	&	4	\\
4	&	-8	&	-5	&	0	&	4	&	1	&	-4	&	-3	&	8	&	4	\\
1	&	0	&	3	&	0	&	4	&	1	&	-4	&	3	&	-8	&	0	\\ \hline
0	&	0	&	4	&	1	&	0	&	4	&	3	&	-8	&	-4	&	0	\\
-8	&	-4	&	-12	&	1	&	0	&	4	&	3	&	8	&	4	&	8	\\
3	&	0	&	0	&	1	&	0	&	4	&	-3	&	-8	&	4	&	0	\\
-5	&	4	&	-8	&	1	&	0	&	4	&	-3	&	8	&	-4	&	4	\\
-2	&	3	&	-4	&	1	&	0	&	4	&	-3	&	8	&	-4	&	1	\\
6	&	-1	&	-4	&	1	&	0	&	4	&	-3	&	-8	&	4	&	1	\\
-5	&	-3	&	-8	&	1	&	0	&	4	&	3	&	8	&	4	&	4	\\
3	&	1	&	0	&	1	&	0	&	4	&	3	&	-8	&	-4	&	0	\\ \hline
0	&	0	&	4	&	0	&	1	&	4	&	3	&	-4	&	-8	&	0	\\
-4	&	-8	&	-12	&	0	&	1	&	4	&	3	&	4	&	8	&	8	\\
3	&	-2	&	-4	&	0	&	1	&	4	&	-3	&	-4	&	8	&	1	\\
-1	&	6	&	-4	&	0	&	1	&	4	&	-3	&	4	&	-8	&	1	\\
0	&	3	&	0	&	0	&	1	&	4	&	-3	&	4	&	-8	&	0	\\
4	&	-5	&	-8	&	0	&	1	&	4	&	-3	&	-4	&	8	&	4	\\
-3	&	-5	&	-8	&	0	&	1	&	4	&	3	&	4	&	8	&	4	\\
1	&	3	&	0	&	0	&	1	&	4	&	3	&	-4	&	-8	&	0
\end{tabular}
\end{center}
\end{table}

\end{document}